\newtheorem{theorem}{Theorem}[section]
\newtheorem{lemma}[theorem]{Lemma}
\newtheorem{proof}{\textmd{\textit{Proof.}}}
\newtheorem{remark}[theorem]{Remark}
\newtheorem{example}[theorem]{Example}
\newtheorem{definition}[theorem]{Definition}
\newcommand{\qedd}{\hfill \Box}
\newcommand{\ve}{\varepsilon}
\newcommand{\lra}{\longrightarrow}
\newcommand{\ora}{\overrightarrow}
\newcommand{\ola}{\overleftarrow}
\newcommand{\wt}{\widetilde}
\newcommand{\ol}{\overline}
\newcommand{\N}{\ensuremath{\mathbb{N}}}
\newcommand{\R}{\ensuremath{\mathbb{R}}}
\newcommand{\Sph}{\ensuremath{\mathbb{S}}}
\newcommand{\cG}{\ensuremath{\mathcal{G}}}
\newcommand{\cN}{\ensuremath{\mathcal{N}}}
\newcommand{\cT}{\ensuremath{\mathcal{T}}}
\newcommand{\cO}{\ensuremath{\mathcal{O}}}
\newcommand{\cR}{\ensuremath{\mathcal{R}}}
\def\Cut{\mathop{\mathrm{Cut}}\nolimits}
\def\Conj{\mathop{\mathrm{Conj}}\nolimits}
\def\CC{\mathop{\mathrm{CC}}\nolimits}
\title{A Toponogov type triangle comparison theorem\\
in Finsler geometry\footnote{
2010 Mathematics Subject Classification. Primary 53C60; Secondary 53C21, 53C22.}
\footnote{Key words and phrases.
Toponogov's comparison theorem, Finsler geometry, flag curvature}}
\author{Kei KONDO\ $\cdot$\
 Shin-ichi OHTA\footnote{Supported in part by Grant-in-Aid for Young Scientists (B) 23740048}
 \ $\cdot$\ Minoru TANAKA}
\date{\today}
\begin{document}
\maketitle

\begin{abstract}
The aim of this article is to establish a Toponogov type triangle comparison theorem
for Finsler manifolds, in the manner of radial curvature geometry.
We consider the situation that the radial flag curvature is bounded
below by the radial curvature function of a non-compact surface of revolution,
the edge opposite to the base point is contained in a Berwald-like region,
and that the Finsler metric is convex enough in the radial directions in that region.
\end{abstract}

\section{Introduction}\label{sec1}

Toponogov's triangle comparison theorem (TCT)
is one of the milestones in global Riemannian geometry.
It asserts that the angles of a geodesic triangle in a complete Riemannian manifold
of non-negative sectional curvature are not smaller than the corresponding angles
of a triangle with the same side lengths in $\R^2$ (\cite{To1}).
TCT was first used to generalize Cohn-Vossen's splitting
theorem (\cite[Satz~5]{CV2}) to higher dimensions (\cite{To2}),
and then has played vital roles in the study of the relationship
between curvature and topology of Riemannian manifolds
(see \cite{GM}, \cite{CG}, \cite{GS}, \cite{G} etc.). 
Fruitful non-trivial consequences can be produced from 
simple geometric techniques using geodesic triangles (as in Euclidean geometry) via TCT.\par 
There are several generalizations of TCT.
One can weaken the assumption by restricting the curvature bound
only to the radial direction from a fixed base point $p$
(then $p$ must be a vertex of the triangle).
It is also possible to consider a model surface other than $\R^2$,
even the one changing the sign of its Gaussian curvature
(see \cite{IMS}, the first and the third authors' \cite{KT2} and \cite{KT3}).
Some applications of TCT are also generalized. For example, 
the diameter sphere theorem of Grove and Shiohama \cite{GS} was extended by the first and the second 
authors in that radial curvature sense (see \cite{KO}).\par 
In \cite{O}, the second author studied a related comparison theorem in Finsler geometry.
Estimated in \cite[Theorem~4.2]{O} was the concavity of the squared distance function
depending on the lower flag curvature bound, the lower tangent curvature bound,
and on the $2$-uniform smoothness of tangent spaces.
The flag curvature is a generalization of the sectional curvature,
while the latter two quantities do not appear in the Riemannian setting.
Because of these non-Riemannian quantities, it is in general impossible
to extend TCT to Finsler manifolds.
It is especially difficult to give the angle a sense.
Then the aim of this article is {\em to find a reasonable non-Riemannian situation
where TCT makes sense}.
It will turn out that a certain triangle in a certain (non-Riemannian)
Finsler manifold can satisfy TCT in a reasonable way.
We remark that, on comparison geometry and geometric analysis related to the Ricci curvature, 
there is a successful theory by the second author and Sturm
for general Finsler manifolds (see \cite{Oint}, \cite{OShf}, \cite{OSbw}, \cite{Ospl}).
\bigskip

In order to state our main theorem, let us introduce several notions.
Let $(M, F, p)$ denote a pair of a forward complete, connected $C^\infty$-Finsler manifold $(M,F)$
and a base point $p \in M$.
Because its distance function $d$ is not symmetric in general, we introduce
\[
d_{\rm{m}} (x, y) := \max\{d(x, y), d(y, x)\}.
\]
It is clear that $d_{\rm{m}}$ is a distance function of $M$.
We can define the `angles' with respect to $d_{\rm{m}}$ as follows.
Let $c :[0,a] \lra M$ be a unit speed minimal geodesic segment
(i.e., $F(\dot{c}) \equiv 1$) with $p \not\in c([0,a])$. 
The \emph{forward} and the \emph{backward angles}
$\ora{\angle}(pc(s)c(a)),\ola{\angle}(pc(s)c(0)) \in [0,\pi]$ at $c(s)$ are defined via
\begin{align}
\cos \ora{\angle}\big( pc(s)c(a) \big) &:= -\lim_{h \downarrow 0} 
 \frac{d(p,c(s+h)) -d(p, c(s))}{d_{\rm{m}} (c(s), c(s + h))} \quad \text{for $s \in [0,a)$},
 \label{forwardcos1} \\
\cos \ola{\angle}\big( pc(s)c(0) \big) &:= \lim_{h \downarrow 0} 
 \frac{d(p, c(s)) - d (p, c(s-h))}{d_{\rm{m}} (c(s -h), c(s))} \quad \text{for $s \in (0, a]$}.
 \label{forwardcos2}
\end{align}
These limits indeed exist in $[-1,1]$ (see Lemma~\ref{2011_05_27_lem1},
we use $d_{\rm{m}}$ rather than $d$ for ensuring that they live in $[-1,1]$).
Given three distinct points $p, x, y \in M$, we will denote by 
\[ \triangle (\ora{px}, \ora{py}) := (p, x, y; \gamma, \sigma, c) \]
the \emph{forward triangle} consisting of unit speed minimal geodesic segments
$\gamma$ emanating from $p$ to $x$, $\sigma$ from $p$ to $y$, and $c$ from $x$ to $y$.
Define the interior angles $\ora{\angle}x, \ola{\angle}y$ at the vertices $x$, $y$ by 
\[ \ora{\angle}x := \ora{\angle}(pxy), \qquad
 \ola{\angle}y := \ola{\angle}(pyx). \]

For a local coordinate $(x^i)^{n}_{i=1}$ of an open subset $\cO$ of $M$,
let $(x^i, v^j)_{i,j=1}^{n}$ be the coordinate of the tangent bundle $T\cO$ over $\cO$ such that 
\[ v:= \sum_{j = 1}^{n} v^j \frac{\partial}{\partial x^j}\Big|_{x},
 \qquad \ x \in \cO. \]
For each $v \in T_xM \setminus \{0\}$, the positive-definite $n \times n$ matrix 
\[
\big( g_{ij} (v) \big)_{i,j= 1}^{n}:= 
\left(
\frac{1}{2} \frac{\partial^2 (F^2)}{\partial v^i \partial v^j}(v)
\right)_{i, j = 1}^{n}
\]
provides us the Riemannian structure $g_{v}$ of $T_x M$ by 
\begin{equation}\label{rieman_str}
g_{v} \left(
\sum_{i = 1}^{n} a^i \frac{\partial}{\partial x^i}\bigg|_{x}, 
\sum_{j = 1}^{n} b^j \frac{\partial}{\partial x^j}\bigg|_{x}
\right) 
:=
\sum_{i,j = 1}^{n} g_{ij} (v) a^ib^j. 
\end{equation}
This is a Riemannian approximation (up to the second order)
of $F$ in the direction $v$.
For two linearly independent vectors $v, w \in T_{x} M \setminus \{0\}$,
the {\em flag curvature} is defined by 
\begin{equation}\label{eq:flag}
K_M (v, w) := \frac{g_{v} (R^{v} (w, v)v, w)}{g_{v} (v, v) g_{v} (w, w) - g_{v} (v, w)^2},
\end{equation}
where $R^{v}$ denotes the curvature tensor induced from the Chern connection (see \cite[\S 3.9]{BCS}). 
We remark that $K_M (v, w)$ depends not only on the \emph{flag} $\{sv + tw\,|\, s, t \in \R\}$,
but also on the \emph{flag pole} $\{sv\,|\, s> 0\}$.

Given $v,w \in T_xM \setminus \{0\}$, define the \emph{tangent curvature} by 
\begin{equation}\label{def_T_curv}
\cT_M(v, w) := g_X\big( D^Y_Y Y(x) - D^X_Y Y(x), X(x) \big), 
\end{equation}
where $X,Y$ are extensions of $v,w$,
and $D_{v}^{w}X(x)$ denotes the covariant derivative of $X$ by $v$ with reference vector $w$. 
Independence of $\cT_M(v,w)$ from the choices of $X,Y$ is easily checked.
Note that $\cT_M \equiv 0$ if and only if $M$ is of \emph{Berwald type}
(see \cite[Propositions~7.2.2, 10.1.1]{Sh}).
In Berwald spaces, for any $x,y \in M$, 
the tangent spaces $(T_xM, F|_{T_xM})$ and $(T_yM, F|_{T_yM})$
are mutually linearly isometric (cf.~\cite[Chapter~10]{BCS}).
In this sense, $\cT_M$ measures the variety of tangent Minkowski spaces.
\bigskip

Let $(\wt{M},\tilde{p})$ denote a complete $2$-dimensional Riemannian manifold 
homeomorphic to $\R^{2}$ with a base point $\tilde{p} \in \wt{M}$
such that its Riemannian metric $d\tilde{s}^2$ is expressed 
in terms of geodesic polar coordinate around $\tilde{p}$ as 
\[
d\tilde{s}^2 = dt^2 + f(t)^2d \theta^2, \qquad 
(t,\theta) \in (0,\infty) \times \Sph_{\tilde{p}}^1,
\]
where $f : (0, \infty) \lra \R$ is a positive smooth function 
which is extensible to a smooth odd function around $0$.
Define the {\em radial curvature function} $G: [0,\infty) \lra \R$
such that $G(t)$ is the Gaussian curvature at $\tilde{\gamma}(t)$,
where $\tilde{\gamma}(t):[0,\infty) \lra \wt{M}$ is any meridian (emanating from $\tilde{p}$).
Note that $f$ satisfies the differential equation 
$f''+Gf=0$ with initial conditions $f(0) = 0$ and $f'(0) = 1$. 
We call $(\wt{M}, \tilde{p})$ a {\em von Mangoldt surface}
if $G$ is non-increasing on $[0, \infty)$.
Paraboloids and $2$-sheeted hyperboloids are typical examples of a von Mangoldt surface.
An atypical example of a von Mangoldt surface is the following.

\begin{example}{\rm (\cite[Example 1.2]{KT1})}\label{2012_03_10_exa1.2}
Set $f (t) := e^{- t^{2}} \tanh t$ on $[0, \infty)$.
Then the surface of revolution $(\wt{M}, \tilde{p})$ 
with $d\tilde{s}^2 = dt^2 +  f(t)^2d \theta^2$ is of von Mangoldt type, 
and its radial curvature function 
\[
G (t) = -\,\frac{f''(t)}{f(t)} = \frac{8t}{\sinh 2t} + \frac{2}{\cosh^{2}t} -4 t^{2} + 2
\]
changes the sign on $[0, \infty)$, indeed,
$\lim_{t \downarrow 0}G (t) = 8$ and $\lim_{t \to \infty}G (t) = - \infty$.
Note that the total curvature is equal to $2 \pi$, since $\lim_{t \to \infty} f'(t) = 0$. 
\end{example}

We say that a Finsler manifold $(M, F, p)$ has the
{\em radial flag curvature bounded below by that of $(\wt{M}, \tilde{p})$} if, 
along every unit speed minimal geodesic $\gamma: [0,l) \lra M$ 
emanating from $p$, we have
\[
K_{M} \big(\dot{\gamma}(t), w \big) \ge G (t)
\]
for all $t \in [0, l)$ and $w \in T_{\gamma(t)}M$ 
linearly independent to $\dot{\gamma}(t)$. 
Given a forward triangle $\triangle (\ora{px}, \ora{py}) \subset M$,
a geodesic triangle $\triangle (\tilde{p}\tilde{x} \tilde{y}) \subset \wt{M}$ is called
its \emph{comparison triangle} if
\[ \tilde{d}(\tilde{p}, \tilde{x}) = d(p, x), \qquad 
\tilde{d}(\tilde{p},\tilde{y}) = d(p, y), \qquad 
\tilde{d}(\tilde{x},\tilde{y}) = L_{\rm{m}}(c) \]
hold, where $\tilde{d}$ denotes the distance function of $d\tilde{s}^2$ and we set
\[
L_{\rm{m}}(c):= \int^{d(x,\,y)}_0 \max\{ F(\dot{c}),F(-\dot{c}) \} \,ds.
\]
Note that a comparison triangle is unique up to an isometry of $\wt{M}$. 
Now we state our main theorem.

\begin{theorem}{{\bf (TCT)}}\label{TCT}
Let $(M, F, p)$ be a forward complete, connected $C^{\infty}$-Finsler manifold
whose radial flag curvature is bounded below by that of a von Mangoldt surface 
$(\wt{M}, \tilde{p})$ satisfying $f'(\rho) =0$ and $G(\rho)\ne0$ for a unique $\rho \in (0, \infty)$. 
Let $\triangle (\ora{px}, \ora{py}) = (p, x, y; \gamma, \sigma, c) \subset M$ be a forward triangle 
satisfying that, for some open neighborhood $\cN(c)$ of $c$,
\begin{enumerate}[{\rm (1)}]
\item
$c([0,d(x,y)]) \subset M \setminus \ol{B^+_{\rho} (p)}$,
\item 
$g_v(w,w) \ge F(w)^2$ for all $z \in \cN(c)$, $v \in \cG_p(z)$ and $w \in T_zM$,
\item
$\cT_{M}(v,w) = 0$ for all $z \in \cN(c)$, $v \in \cG_p(z)$ and $w \in T_zM$,
and the reverse curve $\bar{c}(s):=c(d(x,y)-s)$ $(s \in [0,d(x,y)])$ of $c$ is geodesic.
\end{enumerate}
If such $\triangle (\ora{px}, \ora{py})$ admits a comparison triangle $\triangle (\tilde{p}\tilde{x} \tilde{y})$ in $\wt{M}$,
then we have $\ora{\angle} x \ge \angle \tilde{x}$ and $\ola{\angle} y \ge \angle \tilde{y}$.
\end{theorem}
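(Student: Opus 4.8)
The plan is to reduce the angle comparison to a statement about the function $r(s):=d(p,c(s))$ along the edge $c$, and to use the non-Riemannian hypotheses (2)--(3) precisely to force $r$ to obey the same differential inequality it would obey in the Riemannian radial setting, after which the conclusion follows from the comparison machinery already available for von Mangoldt surfaces. First I would set up the dictionary between the angles and one-sided derivatives of $r$. By the first variation formula, at a parameter $s$ where the minimal geodesic $\gamma_s$ from $p$ to $c(s)$ is unique one has $r'(s)=g_{\dot\gamma_s}(\dot\gamma_s,\dot c(s))$ evaluated at $c(s)$ (with $F(\dot\gamma_s)\equiv1$), and in general $r$ is locally Lipschitz and one-sidedly differentiable, the one-sided derivatives being controlled by the minimal geodesics in $\cG_p(c(s))$. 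Since $c$ is a geodesic and, by (3), $\bar c$ is a geodesic, both $F(\dot c)\equiv1$ and $F(-\dot c)\equiv\mu$ are constant; put $\nu:=\max\{1,\mu\}$, so that $L_{\rm m}(c)=\nu\,d(x,y)$ and $d_{\rm m}(c(s),c(s\pm h))=\nu h$ for small $h$. After the affine reparametrization $\tau:=\nu s$, which puts $c$ on the same parameter interval $[0,L_{\rm m}(c)]$ as the unit-speed edge $\tilde c$ of the comparison triangle, \eqref{forwardcos1}--\eqref{forwardcos2} unwind to $\cos\ora{\angle}x=-\tfrac{dr}{d\tau}(0^{+})$ and $\cos\ola{\angle}y=\tfrac{dr}{d\tau}(L_{\rm m}(c)^{-})$, while for $\tilde r(s):=\tilde d(\tilde p,\tilde c(s))$ the first variation in $\wt{M}$ gives $\cos\angle\tilde{x}=-\tilde r'(0^{+})$ and $\cos\angle\tilde{y}=\tilde r'(L_{\rm m}(c)^{-})$. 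Hence the theorem reduces to $\tfrac{dr}{d\tau}(0^{+})\ge\tilde r'(0^{+})$ and $\tfrac{dr}{d\tau}(L_{\rm m}(c)^{-})\le\tilde r'(L_{\rm m}(c)^{-})$.

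The heart of the matter is the differential inequality
\[
\frac{d^{2}r}{d\tau^{2}}\ \le\ \frac{f'(r)}{f(r)}\left(1-\left(\frac{dr}{d\tau}\right)^{2}\right),
\]
read in the barrier sense at the finitely many parameters where $\gamma_\tau$ is not unique, which are handled by Calabi's device of replacing $\gamma_\tau$ by a slightly shorter support geodesic. Fix $s_0$ with $\gamma_{s_0}$ unique and form the geodesic variation whose longitudes are the unit-speed minimal geodesics from $p$ to $c(s)$; its variation field $J$ is a Finsler Jacobi field along $\gamma_{s_0}$ with $J(p)=0$ and $J=\dot c(s_0)$ at $c(s_0)$, and the second variation of arc length writes $r''(s_0)$ (in the $s$-parameter) as the index form $I_{\gamma_{s_0}}(J^{\perp},J^{\perp})$ with respect to the Riemannian metrics $g_{\dot\gamma_{s_0}}$ along $\gamma_{s_0}$, plus a tangent-curvature contribution and a boundary term carrying the acceleration of the edge. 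Here hypothesis (3) --- that $\cT_M(v,\cdot)\equiv0$ for radial $v$ on $\cN(c)$, through which the terminal portion of $\gamma_{s_0}$ runs, together with $\bar c$ being a geodesic (recall $c$ already is) --- is exactly what annihilates these two genuinely Finslerian terms. Then the radial flag curvature bound $K_M(\dot\gamma_{s_0},\cdot)\ge G$ feeds a Rauch-type estimate against the solution $f$ of $y''+Gy=0$, $y(0)=0$, $y'(0)=1$, giving
\[
r''(s_0)\ \le\ \frac{f'(r(s_0))}{f(r(s_0))}\Bigl(g_{\dot\gamma_{s_0}}(\dot c,\dot c)-r'(s_0)^{2}\Bigr),
\]
where $g_{\dot\gamma_{s_0}}(\dot c,\dot c)-r'(s_0)^{2}\ge0$ by Cauchy--Schwarz in $g_{\dot\gamma_{s_0}}$. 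Applying hypothesis (2) to both $\dot c$ and $-\dot c$ gives $g_{\dot\gamma_{s_0}}(\dot c,\dot c)\ge\max\{1,\mu^{2}\}=\nu^{2}$, and since hypothesis (1) keeps $c$ in the region $\{r>\rho\}$, where $f'<0$ strictly (this is where the uniqueness of $\rho$ with $f'(\rho)=0$ and $G(\rho)\ne0$ is used), multiplying through by the negative factor $f'(r)/f(r)$ upgrades the last estimate to $r''(s_0)\le\frac{f'(r)}{f(r)}(\nu^{2}-r'(s_0)^{2})$, which is the displayed inequality after the substitution $\tau=\nu s$.

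On the model side, along the geodesic $\tilde c$ the Hessian of $\tilde r$ equals $\frac{f'(\tilde r)}{f(\tilde r)}(g-d\tilde r\otimes d\tilde r)$ on the surface of revolution, hence $\tilde r''=\frac{f'(\tilde r)}{f(\tilde r)}(1-\tilde r'^{2})$ holds with equality; thus $r$ (in the $\tau$-parameter) and $\tilde r$ satisfy the same equation, one as an inequality, on the common interval $[0,L_{\rm m}(c)]$, with matching endpoint values $r(0)=d(p,x)=\tilde r(0)$ and $r(L_{\rm m}(c))=d(p,y)=\tilde r(L_{\rm m}(c))$. The comparison lemma for von Mangoldt surfaces having a single critical value $\rho$ of $f$ --- the ODE comparison underlying the Riemannian radial Toponogov theorem of the first and third authors --- then yields $\tfrac{dr}{d\tau}(0^{+})\ge\tilde r'(0^{+})$ and $\tfrac{dr}{d\tau}(L_{\rm m}(c)^{-})\le\tilde r'(L_{\rm m}(c)^{-})$, which by the first step is precisely $\ora{\angle}x\ge\angle\tilde{x}$ and $\ola{\angle}y\ge\angle\tilde{y}$.

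I expect the principal obstacle to lie in the middle step: showing rigorously that (1)--(3) conspire to reduce the genuinely Finslerian second variation of $r$ --- with its reference-vector dependence, its tangent-curvature term, and the gap between $F$ and the Riemannian approximation $g_{\dot\gamma}$ --- to the clean inequality above, and in particular explaining why $\cT_M\equiv0$ merely on the tube $\cN(c)$, rather than along each entire radial geodesic, together with $\bar c$ being a geodesic, already suffices to kill every non-Riemannian term; a secondary technical point is organizing the barrier argument at the cut points of $r$ so that the subsequent ODE comparison still goes through.
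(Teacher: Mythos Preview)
Your reduction of the angle comparison to one-sided derivatives of $r(\tau)=d(p,c(\tau))$ is correct, and so, essentially, is the second-variation step: with $\cT_M(\dot\gamma,\dot c)=0$ the formula $r''=I_l(J^\perp,J^\perp)$ holds at smooth points, Rauch against the model gives $I_l(J^\perp,J^\perp)\le \frac{f'(r)}{f(r)}\|J^\perp\|^2_{g_{\dot\gamma}}$, and hypothesis~(2) combined with $f'(r)<0$ on $\{r>\rho\}$ then yields the clean inequality $\frac{d^2r}{d\tau^2}\le \frac{f'(r)}{f(r)}\bigl(1-(\frac{dr}{d\tau})^2\bigr)$. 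Two small corrections: $\cT_M$ is evaluated only at the endpoint $c(s_0)$, not along the terminal portion of $\gamma_{s_0}$, so the fact that $\cN(c)$ is just a tube is already enough; and the hypothesis that $\bar c$ is a geodesic is not used to kill a boundary term in the second variation (the only extra term \emph{is} $\cT_M$), but rather to make $F(-\dot c)$ constant so that $\nu$ is constant and $\tau=\nu s$ is genuinely affine.

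The substantive gap is the last step. You write that ``the comparison lemma for von Mangoldt surfaces\dots\ the ODE comparison underlying the Riemannian radial Toponogov theorem'' now gives $r'(0^+)\ge\tilde r'(0^+)$ and $r'(T^-)\le\tilde r'(T^-)$ from the matching boundary values and the differential inequality/equality pair. But no such standalone ODE lemma is proved in the cited references; the Riemannian radial TCT of \cite{KT2,KT3} is itself established by the thin-triangle-plus-gluing mechanism, not by a scalar comparison principle for $r''\le h(r)(1-r'^2)$ versus $\tilde r''=h(\tilde r)(1-\tilde r'^2)$. The inequality is nonlinear in $(r,r')$ and $h=f'/f$ changes sign at $\rho$ (and $\tilde r$ need not remain above $\rho$), so neither Sturm comparison nor a maximum principle applies directly; what is really being asserted is the point-on-side inequality $r(\tau)\ge\tilde r(\tau)$, which is equivalent to TCT and requires its own argument. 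The paper supplies exactly that argument: it perturbs the model to $\wt M_\delta$ to manufacture a strict gain in the index-form estimate (Lemma~\ref{lem2.5}), uses this to prove TCT for sufficiently thin triangles in $\wt M_\delta$ (Lemma~\ref{2011_07_08_lem3.10}), glues thin triangles via the double triangle lemma on the model (Lemma~\ref{2011_06_28_lem4.4}), removes the conjugate-cut obstruction by an approximation based on $\dim_H\CC(p)\le n-2$ (Lemmas~\ref{lem2.1}, \ref{lem2.2}), and finally lets $\delta\downarrow0$. Your barrier idea at ordinary cut points is fine, but it does not handle conjugate cut points, where no smooth support geodesic is available; this is why the density/approximation step is needed.

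In short, you have correctly isolated the differential inequality---which is indeed where hypotheses (1)--(3) enter---but the step you black-box is precisely the content of the theorem, and your ``expected principal obstacle'' (the second variation) is in fact the easier half. To complete your route you would have to either prove the scalar comparison directly (which, unwound, reproduces the paper's thin-triangle-and-gluing argument on $\wt M$), or explicitly invoke that argument rather than an ODE lemma that does not exist in the literature.
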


See \eqref{eq:Gp} for the definition of $\cG_p(z)$,
and $B^+_{\rho}(p)$ denotes the forward open ball (see \eqref{eq:ball}). 
The hypothesis (2) means that $F^2|_{T_zM}$ is convex enough in the direction
near $v$ (cf.\ \cite[\S 4.3]{O}), and (3) holds true when $F$ is of Berwald type on $\cN(c)$
for instance.
We  employ $L_{\rm{m}}(c)$ because $\bar{c}$ may not be minimal.
If $\bar{c}$ is minimal, then $L_{\rm{m}}(c)=d_{\rm{m}}(x,y)$ simply holds.
We remark that $c$ is not necessarily geodesic with respect to the Riemannian structure $g_v$.
Indeed, (3) implies only $g_v(D^v_{\dot{c}} \dot{c},v)=0$,
and $D^v_{\dot{c}} \dot{c}$ does not coincide with the corresponding 
covariant derivative with respect to $g_v$ in general.

It is not difficult to construct non-Riemannian examples satisfying the conditions in the theorem.
Let us start with a Riemannian manifold $(M,g)$ satisfying the curvature condition.
We modify (the unit spheres of) $g$ on $M \setminus B^+_{\rho}(p)$,
outside a neighborhood of $\bigcup_{z \in M \setminus B^+_{\rho}(p)} \cG_p(z)$,
in such a way that (2) holds.
Since this modification does not affect $g_v$ for $v \in \bigcup_{z \in M \setminus B^+_{\rho}(p)} \cG_p(z)$,
the resulting non-Riemannian metric still satisfies (3).
Another example is a Finsler manifold $(M,F)$ satisfying the curvature condition
such that $F$ is Riemannian on $M \setminus B^+_{\rho}(p)$.

\begin{remark}
There are several applications of Theorem~\ref{TCT}, e.g., we proved 
the finiteness of topological type and a diffeomorphism theorem to Euclidean spaces (\cite{KOT}), and the first author generalized the diameter sphere theorem of Grove and Shiohama 
by modifying Theorem~\ref{TCT} (\cite{K}).
\end{remark}

The organization of this article is as follows.
In Section~\ref{sec2}, we verify the validity of the forward and the backward angles. 
Then we apply the techniques developed by the first and the third authors (\cite{KT2}, \cite{KT3})
in Sections~\ref{sec3}--\ref{sec6}.
Section~\ref{sec3} is devoted to the key estimate (Lemma~\ref{lem2.6}) 
on the lengths of geodesic variations. From this, we readily derives weak TCT 
(i.e., TCT with respect to slightly worse model surfaces) for thin triangles outside 
the cut or conjugate locus of the base point in Section~\ref{sec4}.
In Section~\ref{sec5}, we prove the double triangle lemma for model surfaces of revolution,
which enables us to glue two thin triangles.
We finally show TCT by gluing thin triangles and improving the model surface in Section~\ref{sec6}.

\section{Angles}\label{sec2}

Throughout the article, $(M,F,p)$ denotes a forward complete, connected
$C^\infty$-Finsler manifold with a base point $p \in M$,
and $d$ denotes its distance function.
We refer to \cite{BCS} for the basics of Finsler geometry.
The forward completeness guarantees that any two points in $M$ can be joined by
a minimal geodesic segment (by the Hopf-Rinow theorem, \cite[Theorem~6.6.1]{BCS}).
We will not assume the \emph{reversibility} $F(-v)=F(v)$.
Thus $d(x,y) \neq d(y,x)$ can happen and the reverse curve of a geodesic
is not necessarily geodesic.

For each $x \in M \setminus \{ p\}$, we set 
\begin{equation}\label{eq:Gp}
\mathcal{G}_p (x) := \{ \dot{\gamma}(l) \in T_{x}M \,|\,
 \text{$\gamma$ is a minimal geodesic segment from $p$ to $x$} \},
\end{equation}
where $\gamma:[0,l] \lra M$ with $l=d(p, x)$. Recall \eqref{rieman_str} for the definition of $g_v$. 

\begin{theorem}{\rm (\cite[Proposition 2.1]{TS})}\label{2011_05_27_thm1}
Take $x \in M \setminus \{p\}$ and let $\{x_{i}\}_{i=1}^{\infty} \subset M \setminus \{p,x\}$
be a sequence converging to $x$.
For sufficiently large $i$, we set 
\[ w_i := \frac{1}{F(\exp_{x}^{-1} (x_i))}  \exp_{x}^{-1} (x_i) \ \in T_xM, \]
where $\exp_{x}^{-1} (x_i)$ means the initial velocity 
of the unique minimal geodesic segment emanating from $x$ to $x_i$.
If $w := \lim_{i \to \infty} w_i$ exists, then we have
\[ \lim_{i \to \infty} \frac{d (p, x_i) -d(p, x)}{d(x, x_i)}
 =\min \{ g_{v} (v,w) \,|\, v \in \mathcal{G}_{p}(x)\}. \]
\end{theorem}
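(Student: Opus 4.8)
The plan is to establish the two one-sided inequalities $\limsup_i r_i\le\min$ and $\liminf_i r_i\ge\min$ separately, where $r_i:=\tau_i^{-1}\big(d(p,x_i)-d(p,x)\big)$ and $\tau_i:=d(x,x_i)=F(\exp_x^{-1}(x_i))$; note that $\tau_i\downarrow 0$, $x_i=\exp_x(\tau_iw_i)$, the $F$-unit vectors $w_i$ tend to $w$, and $\min_{v\in\cG_p(x)}g_v(v,w)$ is attained, since $\cG_p(x)$ is a nonempty compact subset of the $F$-unit sphere of $T_xM$ (by forward completeness and continuous dependence of geodesics on their data) and $v\mapsto g_v(v,w)$ is continuous there. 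The device behind both inequalities is to \emph{retract} slightly along a minimal geodesic into the smooth region of a distance function: if $q\ne y$ and $y\notin\Cut(q)$, then $d(q,\cdot)$ is $C^\infty$ near $y$ and the first variation of arc length (see \cite{BCS}) gives $d\big(q,\exp_y(\tau u)\big)=d(q,y)+\tau\,g_{\dot\eta}(\dot\eta,u)+O(\tau^2)$, uniformly for $u$ in a compact set, where $\dot\eta$ is the terminal velocity of the unit-speed minimal geodesic from $q$ to $y$.

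For $\limsup_i r_i\le\min$ I would fix $v\in\cG_p(x)$ and a unit-speed minimal geodesic $\gamma:[0,l]\to M$ from $p$ to $x$ with $\dot\gamma(l)=v$, $l=d(p,x)$. For small $\ve>0$, the point $x$ is not a cut point of $q_\ve:=\gamma(l-\ve)$ — the minimal geodesic $\gamma|_{[l-\ve,l]}$ from $q_\ve$ to $x$ is unique, short, and conjugate-point-free — so $d(q_\ve,\cdot)$ is $C^\infty$ near $x$ with $F$-gradient $v$ there. Concatenating $\gamma|_{[0,l-\ve]}$ with a minimal geodesic from $q_\ve$ to $x_i$ and applying the expansion above yields $d(p,x_i)\le(l-\ve)+d(q_\ve,x_i)=l+\tau_i\,g_v(v,w_i)+O(\tau_i^2)$, hence $\limsup_i r_i\le\lim_i g_v(v,w_i)=g_v(v,w)$; minimizing over $v\in\cG_p(x)$ gives the bound.

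For $\liminf_i r_i\ge\min$ I would choose for each $i$ a unit-speed minimal geodesic $\gamma_i:[0,l_i]\to M$ from $p$ to $x_i$, $l_i=d(p,x_i)\to l$, and pass to a subsequence along which $\gamma_i$ converges (Arzel\`a--Ascoli together with forward completeness) to a minimal geodesic $\gamma_\infty$ from $p$ to $x$; set $v_\infty:=\dot\gamma_\infty(l)\in\cG_p(x)$. Fixing small $\ve>0$, for large $i$ the point $x_i$ is not a cut point of $q_i:=\gamma_i(l_i-\ve)$, and $d(q_i,\cdot)$ is $C^\infty$ near $x_i$ with $F$-gradient $v_i:=\dot\gamma_i(l_i)$. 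Concatenating $\gamma_i|_{[0,l_i-\ve]}$ with a minimal geodesic from $q_i$ to $x$ gives $d(p,x)\le(l_i-\ve)+d(q_i,x)=l_i+g_{v_i}(v_i,\xi_i)+o(\tau_i)$ with $\xi_i:=\exp_{x_i}^{-1}(x)$, so $r_i\ge-g_{v_i}(v_i,\tau_i^{-1}\xi_i)+o(1)$. Along the subsequence $v_i\to v_\infty$, and $\tau_i^{-1}\xi_i\to-w$ (the geodesic from $x_i=\exp_x(\tau_iw_i)$ back to $x$ has, to leading order, initial velocity $-\tau_iw_i$, since $\exp_x$ is tangent to the identity at $0$); continuity of $(a,b)\mapsto g_a(a,b)$ then yields $g_{v_i}(v_i,\tau_i^{-1}\xi_i)\to g_{v_\infty}(v_\infty,-w)=-g_{v_\infty}(v_\infty,w)$. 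Hence $\liminf r_i\ge g_{v_\infty}(v_\infty,w)\ge\min_v g_v(v,w)$ along this subsequence, and since the subsequence was drawn from an arbitrary one, the same holds for the whole sequence; combining the two inequalities finishes the proof.

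The hard part will be the lower bound. Beyond the subsequence bookkeeping, one must make the retraction uniform in $i$, invoking a positive forward injectivity radius along the eventually-uniformly-close geodesics $\gamma_i$ so that all $d(q_i,\cdot)$ are simultaneously smooth near $x_i$, and one must justify $\tau_i^{-1}\xi_i\to-w$ carefully, the vectors $\xi_i$ and $w_i$ lying in different tangent spaces and $F$ being non-reversible. Everything hinges on the Finsler first variation formula recalled in the first paragraph.
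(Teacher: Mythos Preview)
The paper does not give a proof of this theorem: it is simply quoted from \cite[Proposition~2.1]{TS} and used as a black box in the subsequent angle computations. So there is no ``paper's own proof'' to compare against.

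Your argument is correct and is the standard proof of this type of first–variation identity. The two halves are exactly what one expects: the upper bound comes from the triangle inequality $d(p,x_i)\le d(p,q_\ve)+d(q_\ve,x_i)$ together with smoothness of $d(q_\ve,\cdot)$ near $x$ (valid because $q_\ve$ is close to $x$, so $x\notin\Cut(q_\ve)$), and the lower bound comes from $d(p,x)\le d(p,q_i)+d(q_i,x)$ plus Arzel\`a--Ascoli on the geodesics $\gamma_i$. The only points requiring care are precisely the ones you flag at the end. For the convergence $\tau_i^{-1}\xi_i\to -w$, the cleanest justification is to work in a fixed coordinate chart around $x$: writing $x_i=x+\tau_iw_i+O(\tau_i^2)$ and $x=x_i+\xi_i+O(|\xi_i|^2)$ gives $\xi_i=-\tau_iw_i+O(\tau_i^2)$ as coordinate vectors, hence $(x_i,\tau_i^{-1}\xi_i)\to(x,-w)$ in $TM$; non-reversibility enters only in that $F(\xi_i)=d(x_i,x)$ need not equal $\tau_i=d(x,x_i)$, but the ratio is locally bounded, so the $O(F(\xi_i)^2)$ remainder is still $o(\tau_i)$. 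For the uniformity in the lower bound, note that $(q_i,x_i)$ and $(q_i,x)$ both converge to $(\gamma_\infty(l-\ve),x)$, and the distance function is $C^\infty$ on a neighbourhood of this pair in $M\times M$; hence the second-order remainder is uniformly controlled.
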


\begin{lemma}\label{2011_05_27_lem1}
Let $c :[0, a] \lra M$ be a unit speed minimal geodesic segment satisfying $p \not\in c([0,a])$. 
Then the forward and the backward angles as in \eqref{forwardcos1} and \eqref{forwardcos2} 
are well-defined.
\end{lemma}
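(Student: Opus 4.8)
The plan is to produce an explicit formula for each of the four one-sided limits in \eqref{forwardcos1}--\eqref{forwardcos2} by applying Theorem~\ref{2011_05_27_thm1} to $c$ at the relevant point, and then to bound the formula by the fundamental inequality $g_v(v,w)\le F(v)F(w)$ of Finsler geometry (see \cite{BCS}); the use of $d_{\rm m}$ rather than $d$ in the denominators is exactly what is needed to keep the resulting expression in $[-1,1]$.

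First I would treat the forward angle at $c(s)$, $s\in[0,a)$ (the backward case being symmetric). For $h>0$ small, $c(s+h)$ lies in a normal neighbourhood of $c(s)$ and the minimal geodesic from $c(s)$ to $c(s+h)$ is $c|_{[s,s+h]}$, so $\exp_{c(s)}^{-1}(c(s+h))=h\dot{c}(s)$ and $d(c(s),c(s+h))=h$; also $c(s+h)\in M\setminus\{p,c(s)\}$ since $c$ is injective and $p\notin c([0,a])$. Hence, along any sequence $h_i\downarrow0$, the vectors $w_i$ of Theorem~\ref{2011_05_27_thm1} (taken with $x=c(s)$, $x_i=c(s+h_i)$) all equal $\dot{c}(s)$, and that theorem gives, independently of the sequence,
\[
\lim_{h\downarrow0}\frac{d(p,c(s+h))-d(p,c(s))}{h}=\min\{g_v(v,\dot{c}(s))\mid v\in\cG_p(c(s))\}=:m_s .
\]
Then I would pass from $h=d(c(s),c(s+h))$ to $d_{\rm m}(c(s),c(s+h))=\max\{h,d(c(s+h),c(s))\}$ in the denominator. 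Writing $u(h):=\exp_{c(s+h)}^{-1}(c(s))$, the identity $(d\exp)_0=\id$ gives $u(h)=-h\dot{c}(s)+o(h)$ in a local chart, hence $d(c(s+h),c(s))=F(u(h))=h\,F(-\dot{c}(s))+o(h)$ and $d_{\rm m}(c(s),c(s+h))=h\,\max\{1,F(-\dot{c}(s))\}+o(h)$ with $\max\{1,F(-\dot{c}(s))\}\ge1$. Consequently the limit \eqref{forwardcos1} exists, with
\[
\cos\ora{\angle}\big(pc(s)c(a)\big)=-\,\frac{m_s}{\max\{1,F(-\dot{c}(s))\}} .
\]

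It remains to see $|m_s|\le\max\{1,F(-\dot{c}(s))\}$. For $v\in\cG_p(c(s))$ one has $F(v)=1$, so the fundamental inequality yields $g_v(v,\dot{c}(s))\le F(v)F(\dot{c}(s))=1$ and $g_v(v,-\dot{c}(s))\le F(v)F(-\dot{c}(s))=F(-\dot{c}(s))$, i.e.\ $-F(-\dot{c}(s))\le g_v(v,\dot{c}(s))\le1$; thus $|g_v(v,\dot{c}(s))|\le\max\{1,F(-\dot{c}(s))\}$ for each such $v$, and this bound passes to the minimum $m_s$. The backward angle at $c(s)$, $s\in(0,a]$, is handled by the same scheme with $x_i=c(s-h_i)$: here $c|_{[s-h,s]}$ is minimal, so $d(c(s-h),c(s))=h$, while $\exp_{c(s)}^{-1}(c(s-h))=-h\dot{c}(s)+o(h)$ gives $d(c(s),c(s-h))=h\,F(-\dot{c}(s))+o(h)$ and limiting direction $w=-\dot{c}(s)/F(-\dot{c}(s))$, leading to
\[
\cos\ola{\angle}\big(pc(s)c(0)\big)=-\,\frac{\min\{g_v(v,-\dot{c}(s))\mid v\in\cG_p(c(s))\}}{\max\{1,F(-\dot{c}(s))\}}\in[-1,1]
\]
by the same two uses of the fundamental inequality.

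I expect the only genuinely delicate point to be this last bit of bookkeeping: to land inside $[-1,1]$ one must measure the denominators with $d_{\rm m}$ and combine \emph{both} inequalities $g_v(v,\pm\dot{c}(s))\le F(v)F(\pm\dot{c}(s))$ with $F(v)=1$ on $\cG_p(c(s))$. The existence of the one-sided limits themselves is an immediate consequence of Theorem~\ref{2011_05_27_thm1} and the first-order behaviour of $d$ along $c$.
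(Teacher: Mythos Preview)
Your proof is correct and follows essentially the same approach as the paper's: apply Theorem~\ref{2011_05_27_thm1} to identify the one-sided derivative of $d(p,c(\cdot))$, convert the denominator from $d$ to $d_{\rm m}$ via $d_{\rm m}(c(s),c(s+h))/h\to\max\{1,F(-\dot c(s))\}$, and bound the result using $g_v(v,\pm\dot c(s))\le F(v)F(\pm\dot c(s))$. The only cosmetic difference is that the paper splits into the cases $F(-\dot c(s))\le1$ and $F(-\dot c(s))>1$, whereas you handle both at once by writing $\lambda=\max\{1,F(-\dot c(s))\}$ from the start; the resulting formulas agree (your $-\min\{g_v(v,-\dot c(s))\}/\lambda$ for the backward angle is exactly the paper's $\lambda^{-1}\max\{g_v(v,\dot c(s))\}$).
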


\begin{proof}
We need to take a little care on the non-reversibility of $F$.
Fix $s \in [0,a)$.
If $F(\dot{c}(s))=1 \ge F(-\dot{c}(s))$, then we have
\[ \lim_{h \downarrow 0} \frac{d(c(s),c(s + h))}{d_{\rm{m}}(c(s),c(s + h))} =1 \]
and hence Theorem~\ref{2011_05_27_thm1} yields
\begin{align*}
\lim_{h \downarrow 0} \frac{d (p, c(s + h)) -d(p, c(s))}{d_{\rm{m}} (c(s), c(s + h))} 
&= \lim_{h \downarrow 0} \frac{d (p, c(s + h)) -d(p, c(s))}{d(c(s), c(s + h))} \\
&= \min \big\{ g_{v} \big( v,\dot{c}(s) \big) \,|\, v \in \mathcal{G}_{p}\big( c(s) \big) \big\}.
\end{align*}
Note that, for every $v \in \mathcal{G}_p(c(s))$,
\begin{equation}\label{eq:angle}
g_v\big( v,\dot{c}(s) \big) \le F(v) F\big( \dot{c}(s) \big) =1, \qquad
g_v\big( v,\dot{c}(s) \big) \ge -F(v) F\!\left( -\dot{c}(s) \big) \right. \ge -1.
\end{equation}
If $F(-\dot{c}(s))>1$, then
\[ \lim_{h \downarrow 0} \frac{d(c(s),c(s + h))}{d_{\rm{m}}(c(s),c(s + h))}
 =\frac{1}{F(-\dot{c}(s))} \]
and Theorem~\ref{2011_05_27_thm1} imply
\[ \lim_{h \downarrow 0} \frac{d (p, c(s + h)) -d(p, c(s))}{d_{\rm{m}} (c(s), c(s + h))} 
 =\frac{1}{F(-\dot{c}(s))}
 \min \big\{ g_{v} \big( v,\dot{c}(s) \big) \,\big|\, v \in \mathcal{G}_{p}\big( c(s) \big) \big\}. \]
Since $|g_v(v,\dot{c}(s))| \le F(-\dot{c}(s))$,
\eqref{forwardcos1} exists in $[-1,1]$ in both cases.

The same argument shows that \eqref{forwardcos2} is well-defined.
Precisely, for $s \in (0,a]$, we have
\[ \lim_{h \downarrow 0} \frac{d(p,c(s-h)) -d(p,c(s))}{d(c(s),c(s-h))} 
 =\min \left\{ g_{v} \left( v,\frac{-\dot{c}(s)}{F(-\dot{c}(s))} \right) \,\bigg|\, v \in \mathcal{G}_{p}\big( c(s) \big) \right\} \]
and hence, by letting $\lambda:=\max\{1,F(-\dot{c}(s))\}$,
\[ \lim_{h \downarrow 0} \frac{d (p, c(s)) -d(p, c(s-h))}{d_{\rm{m}} (c(s-h), c(s))} 
 =\lambda^{-1} \max \big\{ g_{v} \big( v,\dot{c}(s) \big) \,\big|\, v \in \mathcal{G}_{p}\big( c(s) \big) \big\}. \]
$\qedd$
\end{proof}

\section{Key lemma on lengths of geodesic variations}\label{sec3}

This section is devoted to the key comparison estimates between the lengths
of geodesic variations in $M$ and a model surface $\wt{M}$.
For $x \in M$ and $r>0$, define the \emph{forward} and the \emph{backward open balls} by
\begin{equation}\label{eq:ball}
B^+_r(x):=\{ y \in M \,|\, d(x,y)<r \}, \qquad B^-_r(x):=\{ y \in M \,|\, d(y,x)<r \},
\end{equation}
respectively.
We also set $B^{\pm}_r(x):=B^+_r(x) \cap B^-_r(x)$.

\subsection{Preliminaries for geodesic variations}\label{sec3.1}

Let $\Cut (p)$ be the cut locus of $p$. 
Take a point $q \in M \setminus (\Cut (p) \cup \{p\})$ and small $r>0$ such that
\[ B^-_{2r}(q) \cap (\Cut (p) \cup \{p\} )= \emptyset \]
and that $B_r^{\pm}(q)$ is geodesically convex
(i.e., any minimal geodesic joining two points in $B_r^{\pm}(q)$ is contained in $B_r^{\pm}(q)$).
Given a unit speed minimal geodesic $c:(-\ve,\ve) \lra B_r^{\pm}(q)$,
we consider the $C^{\infty}$-variation 
\[
\varphi (t, s) := \exp_{p}\left( \frac{t}{l} \exp_p^{-1}\big( c(s) \big) \right),
 \qquad (t,s) \in [0,l] \times (-\ve,\ve),
\]
where $l=d(p,c(0))$.
Since $x:=c(0) \not\in \Cut(p)$, there is a unique minimal geodesic segment
$\gamma:[0,l] \lra M$ emanating from $p$ to $x$.
By setting 
\[ J(t) := \frac{\partial \varphi}{\partial s} (t, 0), \]
we get the Jacobi field $J$ along $\gamma$ with $J(0) = 0$ and $J (l) = \dot{c}(0)$.

\begin{remark}\label{rem_2012_01_21_3.1}
Since $\gamma$ is unique, it follows from Lemma~\ref{2011_05_27_lem1} that 
\[ -\cos \ora{\angle}\big( pxc(\ve) \big) =\cos \ola{\angle}\big( pxc(-\ve) \big)
 =\lambda^{-1} g_{\dot{\gamma}(l)} \big( \dot{\gamma}(l),\dot{c}(0) \big), \]
where $\lambda=\max\{1,F(-\dot{c}(0))\}$.
This yields $\pi-\ora{\angle}(pxc(\ve))=\ola{\angle}(pxc(-\ve))$,
so that we shall set
$\omega:=\pi-\ora{\angle}(pxc(\ve))=\ola{\angle}(pxc(-\ve))$ in the following discussion.
\end{remark}

\begin{lemma}\label{lem2.3.new1}
For each $ t \in [0, l]$, the $g_{\dot{\gamma}}$-orthogonal component $J^{\perp}(t)$
to $\dot{\gamma}(t)$ is given by 
\[
J^{\perp} (t) := J(t) - \frac{g_{\dot{\gamma}(l)} (\dot{\gamma}(l),\dot{c}(0))}{l}t\dot{\gamma}(t).
\]
\end{lemma}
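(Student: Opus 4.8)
The plan is to reduce everything to the linearity in $t$ of the $g_{\dot{\gamma}}$-tangential part of $J$. Set $\phi(t) := g_{\dot{\gamma}(t)}\big( J(t),\dot{\gamma}(t) \big)$. Since $\gamma$ is unit speed, $g_{\dot{\gamma}(t)}(\dot{\gamma}(t),\dot{\gamma}(t)) = F(\dot{\gamma}(t))^2 = 1$ for every $t$, so the $g_{\dot{\gamma}(t)}$-orthogonal projection of $J(t)$ onto $\R\,\dot{\gamma}(t)$ equals $\phi(t)\,\dot{\gamma}(t)$, and therefore $J^{\perp}(t) = J(t) - \phi(t)\,\dot{\gamma}(t)$. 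Because $J(0) = 0$ we have $\phi(0) = 0$, and because $J(l) = \dot{c}(0)$ we have $\phi(l) = g_{\dot{\gamma}(l)}(\dot{\gamma}(l),\dot{c}(0))$. Hence it suffices to prove that $\phi$ is affine in $t$; then $\phi(t) = g_{\dot{\gamma}(l)}(\dot{\gamma}(l),\dot{c}(0))\,t/l$, which is exactly the claimed expression for $J^{\perp}(t)$.

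To see $\phi'' \equiv 0$, I would first use that along the geodesic $\gamma$ the Chern connection is compatible with the Riemannian metric $g_{\dot{\gamma}}$ determined by its own velocity: for vector fields $V,W$ along $\gamma$,
\[
\frac{d}{dt}\,g_{\dot{\gamma}}(V,W)
 = g_{\dot{\gamma}}\big( D^{\dot{\gamma}}_{\dot{\gamma}}V,W \big)
 + g_{\dot{\gamma}}\big( V,D^{\dot{\gamma}}_{\dot{\gamma}}W \big),
\]
the Cartan-tensor correction term that appears in general being a multiple of $D^{\dot{\gamma}}_{\dot{\gamma}}\dot{\gamma}$, which vanishes since $\gamma$ is geodesic (see \cite{BCS}). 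Taking $(V,W) = (J,\dot{\gamma})$ and using $D^{\dot{\gamma}}_{\dot{\gamma}}\dot{\gamma} = 0$ gives $\phi'(t) = g_{\dot{\gamma}}\big( D^{\dot{\gamma}}_{\dot{\gamma}}J,\dot{\gamma} \big)$; differentiating once more and invoking the Jacobi equation $D^{\dot{\gamma}}_{\dot{\gamma}}D^{\dot{\gamma}}_{\dot{\gamma}}J + R^{\dot{\gamma}}(J,\dot{\gamma})\dot{\gamma} = 0$ yields $\phi''(t) = -\,g_{\dot{\gamma}}\big( R^{\dot{\gamma}}(J,\dot{\gamma})\dot{\gamma},\dot{\gamma} \big)$. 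The last quantity is $0$: this is exactly the identity $g_v\big( R^v(w,v)v,v \big) = 0$ that underlies the well-definedness of the flag curvature \eqref{eq:flag} (its independence of the choice of representative $w$ within a fixed flag), and it follows from the symmetries of the Chern curvature tensor in \cite[\S\,3.9]{BCS}. Thus $\phi$ is affine, and the boundary values conclude the proof.

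The only place where the Finsler setting requires care — and the step I would double-check first — is the metric-compatibility identity: it is essential that the reference vector is the velocity $\dot{\gamma}$ itself and that $\gamma$ is a geodesic, so that the Cartan (Landsberg) terms which would otherwise prevent us from differentiating $g_{\dot{\gamma}}(\cdot,\cdot)$ as a genuine Riemannian inner product all drop out. With this in hand, the computation of $\phi'$, $\phi''$, and the vanishing of the curvature term are formally identical to the Riemannian argument.
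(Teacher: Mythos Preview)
Your proof is correct and follows essentially the same approach as the paper: both argue that $\phi(t)=g_{\dot{\gamma}(t)}(\dot{\gamma}(t),J(t))$ is affine in $t$ and then determine it from the boundary values $J(0)=0$, $J(l)=\dot{c}(0)$. The paper simply asserts the affinity as a standard property of Jacobi fields, whereas you spell out the Finsler-specific justification (metric compatibility of $g_{\dot{\gamma}}$ along $\gamma$ with reference vector $\dot{\gamma}$, and the vanishing of $g_{\dot{\gamma}}(R^{\dot{\gamma}}(J,\dot{\gamma})\dot{\gamma},\dot{\gamma})$); this extra care is appropriate and does not change the underlying argument.
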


\begin{proof}
Since $J$ is a Jacobi field along $\gamma$, there exist $a,b \in \R$ 
satisfying $g_{\dot{\gamma}(t)} (\dot{\gamma}(t),J(t))= a t + b$ for all $t \in [0, l]$. 
Since $J(0) = 0$, we find $b = 0$. 
Together with $J (l) = \dot{c}(0)$, we obtain
$a l = g_{\dot{\gamma}(l)} ( \dot{\gamma}(l),\dot{c} (0))$
and hence
\[
g_{\dot{\gamma}(t)} \big( \dot{\gamma}(t),J(t) \big)
= \frac{g_{\dot{\gamma}(l)} ( \dot{\gamma}(l),\dot{c}(0))}{l} t.
\]
Thus the Jacobi field $J^{\perp}$ as above
is $g_{\dot{\gamma}}$-orthogonal to $\dot{\gamma}(t)$ on $[0, l]$. 
$\qedd$
\end{proof}

The {\em index form} with respect to $\gamma|_{[0, \,l]}$ is defined by 
\[
I_{l} (X, Y) 
:= 
\int^{l}_{0} 
\left\{ 
g_{\dot{\gamma}} (D_{\dot{\gamma}}^{\dot{\gamma}}X, 
D_{\dot{\gamma}}^{\dot{\gamma}}Y)-
g_{\dot{\gamma}} (
R^{\dot{\gamma}}(X, \dot{\gamma}) \dot{\gamma}, Y
)
\right\}
dt
\]
for $C^{\infty}$-vector fields $X, Y$ along $\gamma|_{[0, \,l]}$. 
Recall \eqref{def_T_curv} for the definition of the tangent curvature $\cT_M$.

\begin{lemma}\label{lem2.3}
Set $L(s):=d(p,c(s))$.
Then we have
\[
L'(0) = g_{\dot{\gamma}(l)} \big(  \dot{\gamma}(l),\dot{c}(0) \big), \qquad
L''(0) = I_{l} (J^{\perp}, J^{\perp}) -\cT_{M} \big( \dot{\gamma}(l), \dot{c}(0) \big).
\]
\end{lemma}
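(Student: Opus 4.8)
The plan is to establish the first and second variation formulas for the length $L(s)$ of the geodesic $t\mapsto\varphi(t,s)$ and then rewrite the outcome in terms of the index form $I_l$. Write $T:=\partial\varphi/\partial t$ and $U:=\partial\varphi/\partial s$; then $J(t)=U(t,0)$, and at $s=0$ one has $T(t,0)=\dot\gamma(t)$ (note $\gamma$ is unit speed, since $l=d(p,x)$). A convenient reduction is to observe that each slice $\varphi(\cdot,s)$ is a constant-speed geodesic with $F(T(t,s))\equiv L(s)/l$, so that $L(s)^{2}=2l\,E(s)$ with $E(s):=\frac{1}{2}\int_{0}^{l}g_{T}(T,T)\,dt$. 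Differentiating this identity twice at $s=0$ and using $L(0)=l$, the lemma reduces to the two claims $E'(0)=g_{\dot\gamma(l)}(\dot\gamma(l),\dot c(0))$ and $E''(0)=I_{l}(J,J)-\cT_{M}(\dot\gamma(l),\dot c(0))$; the extra term $(L'(0))^{2}/l$ produced by the reduction is exactly what converts $I_{l}(J,J)$ into $I_{l}(J^{\perp},J^{\perp})$, as explained below.

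First I would compute $E'$. Torsion-freeness of the Chern connection gives $D^{T}_{U}T=D^{T}_{T}U$, and since $D^{T}_{T}T=0$ along each slice, the Cartan-tensor corrections in the almost-metric-compatibility rule all vanish (each is contracted with the reference direction $T$), so $\frac{d}{ds}\frac{1}{2}g_{T}(T,T)=g_{T}(D^{T}_{T}U,T)=\frac{d}{dt}g_{T}(U,T)$. Integrating in $t$ with $U(0,s)=0$, $U(l,s)=\dot c(s)$ yields $E'(s)=g_{T(l,s)}(\dot c(s),T(l,s))$, hence $E'(0)=g_{\dot\gamma(l)}(\dot\gamma(l),\dot c(0))$, which is both the first formula of the lemma and, via $L^{2}=2lE$, the value of $L'(0)$.

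Next I would differentiate $E'(s)=g_{T(l,s)}(\dot c(s),T(l,s))$ once more along $c$ using the reference vector $T$; as before the surviving Cartan term is contracted with $T$ and drops out, leaving $E''(0)=g_{\dot\gamma(l)}(D^{X}_{Y}Y,\dot\gamma(l))+g_{\dot\gamma(l)}(\dot c(0),D^{X}_{Y}X)$, where $X,Y$ denote the fields $s\mapsto T(l,s)$ and $s\mapsto\dot c(s)$ along $c$. Since $c$ is a geodesic, $D^{Y}_{Y}Y=D^{\dot c}_{\dot c}\dot c=0$, so the definition \eqref{def_T_curv} of the tangent curvature gives $g_{\dot\gamma(l)}(D^{X}_{Y}Y,\dot\gamma(l))=-\cT_{M}(\dot\gamma(l),\dot c(0))$. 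For the remaining term, the symmetry lemma (torsion-freeness applied to $\varphi$) identifies $D^{X}_{Y}X|_{s=0}$ with $D^{\dot\gamma}_{\dot\gamma}J(l)$, and since $J$ is a Jacobi field along $\gamma$ with $J(0)=0$ and $J(l)=\dot c(0)$, the usual index identity for Jacobi fields gives $g_{\dot\gamma(l)}(\dot c(0),D^{\dot\gamma}_{\dot\gamma}J(l))=g_{\dot\gamma(l)}(J(l),D^{\dot\gamma}_{\dot\gamma}J(l))=I_{l}(J,J)$, proving $E''(0)=I_{l}(J,J)-\cT_{M}(\dot\gamma(l),\dot c(0))$. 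Finally, by Lemma~\ref{lem2.3.new1} I would write $J=J^{\perp}+J^{\top}$ with $J^{\top}(t):=\frac{L'(0)}{l}\,t\,\dot\gamma(t)$ and expand $I_{l}(J,J)=I_{l}(J^{\perp},J^{\perp})+2I_{l}(J^{\perp},J^{\top})+I_{l}(J^{\top},J^{\top})$; using $D^{\dot\gamma}_{\dot\gamma}\dot\gamma=0$, $R^{\dot\gamma}(\dot\gamma,\dot\gamma)\dot\gamma=0$ and one integration by parts, $I_{l}(J^{\top},Y)=\frac{L'(0)}{l}[g_{\dot\gamma}(\dot\gamma,Y)]_{0}^{l}$ for any $Y$, which gives $I_{l}(J^{\perp},J^{\top})=0$ (since $g_{\dot\gamma(l)}(\dot\gamma(l),J^{\perp}(l))=0$ and $J^{\perp}(0)=0$) and $I_{l}(J^{\top},J^{\top})=(L'(0))^{2}/l$. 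Hence $L''(0)=E''(0)-(L'(0))^{2}/l=I_{l}(J^{\perp},J^{\perp})-\cT_{M}(\dot\gamma(l),\dot c(0))$.

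The main obstacle will be the bookkeeping of reference vectors in the Chern connection: one must track carefully whether a given covariant derivative uses reference $T$ (equivalently $\dot\gamma$) or reference $\dot c$, and verify at each step that the non-Riemannian Cartan-tensor correction terms disappear because they are contracted with the reference direction. It is precisely at the one spot where this matching genuinely fails — the gap between $D^{\dot c}_{\dot c}\dot c$ and $D^{\dot\gamma(l)}_{\dot c}\dot c$ — that the tangent curvature $\cT_{M}$ is forced to appear, which is the term absent in the Riemannian case.
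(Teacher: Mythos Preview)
Your argument is correct. The paper's own proof is essentially a one-line citation of the first and second variation formulas in \cite[Exercises~5.1.4, 5.2.7]{BCS}, together with the observation (which you also make) that $D^{\dot c}_{\dot c}\dot c=0$ yields $\cT_M(\dot\gamma(l),\dot c(0))=-g_{\dot\gamma(l)}(D^{\dot\gamma(l)}_{\dot c(0)}\dot c,\dot\gamma(l))$; you instead supply a self-contained derivation by passing through the energy $E(s)=L(s)^2/(2l)$, which is a standard and slightly cleaner route because the constant-speed parametrization makes the Cartan corrections transparently vanish, and the bookkeeping $L''(0)=E''(0)-(L'(0))^2/l$ then matches $I_l(J,J)$ with $I_l(J^\perp,J^\perp)$ exactly as you show. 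The content is the same; your version is more explicit about where each non-Riemannian term drops out and where the one that survives becomes $\cT_M$.
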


\begin{proof}
These are consequences of the fundamental first and second variational formulas
(cf.\ \cite[Exercise 5.1.4]{BCS}, \cite[Exercise 5.2.7]{BCS}).
We only remark that the geodesic equation $D_{\dot{c}}^{\dot{c}} \dot{c} \equiv 0$ implies
$\cT_{M} (\dot{\gamma}(l), \dot{c}(0)) 
= - g_{\dot{\gamma}(l)} (D_{\dot{c}(0)}^{\dot{\gamma}(l)} \dot{c}(0), \dot{\gamma}(l))$.
$\qedd$
\end{proof}

\subsection{Key lemma}\label{sec3.2}

Throughout this subsection, we assume that the radial flag curvature of $(M,F,p)$
is bounded below by that of a von Mangoldt surface $(\wt{M},\tilde{p})$ as in Theorem~\ref{TCT}.
Note that 
\[
f'(t) < 0 \quad \text{on} \ (\rho, \infty),
\]
since $f'(\rho) =0$ and $G(\rho)\ne0$ for a unique $\rho \in (0, \infty)$. 
Given small $\delta>0$, we modify $(\wt{M},\tilde{p})$ into $(\wt{M}_{\delta}, \tilde{o})$
with the metric $d\tilde{s}^2_{\delta} = dt^2 + f_{\delta}(t)^{2} d\theta^2$
on $(0, \infty) \times \Sph_{\tilde{o}}^{1}$ such that $f_{\delta}$ satisfies 
\[ f''_{\delta} + (G - \delta) f_{\delta}= 0,
 \qquad f_{\delta}(0) = 0, \qquad f_{\delta}'(0) = 1,
\]
where $G$ is the radial curvature function of $\wt{M}$.
Since $(\wt{M}_{\delta}, \tilde{o})$ has the less curvature than $(\wt{M},\tilde{p})$,
we can also employ $(\wt{M}_{\delta}, \tilde{o})$ as a reference surface for $M$.
Note that $(\wt{M}_{\delta}, \tilde{o})$ is again of von Mangoldt type satisfying 
\begin{equation}\label{2013_09_16_rem1}
f'_{\delta}(\rho_{\delta}) = 0 \quad \text{and} \quad G(\rho_{\delta}) - \delta \ne0
\end{equation}
for some unique $\rho_{\delta} \in (\rho, \infty)$,
and that $\lim_{\delta \downarrow 0} \rho_{\delta} = \rho$ 
as well as $\lim_{\delta \downarrow 0} f_{\delta} (t) = f(t)$.

Let $c$, $x=c(0)$, $\gamma$ and $l=d(p,x)$ be as in the previous subsection.
Fix a point $\tilde{x} \in \wt{M}_{\delta}$ with $ \tilde{d}_\delta(\tilde{o}, \tilde{x})= l$,
where $\tilde{d}_\delta$ denotes the distance function of $d\tilde{s}^2_{\delta}$. 
Let $\tilde{\gamma} : [0,l] \lra \wt{M}_{\delta}$ be the minimal geodesic segment
from $\tilde{o}$ to $\tilde{x}$, and take a unit parallel vector field $\wt{E}$
along $\tilde{\gamma}$ orthogonal to $\dot{\tilde{\gamma}}$.
Define the Jacobi field $\wt{X}$ along $\tilde{\gamma}$ by 
\[
\wt{X}(t) := \frac{1}{f_{\delta}(l)} f_{\delta}(t) \wt{E}(t),
\]
and denote by $\wt{I}_{l}(\,\cdot\,,\,\cdot\,)$ the index form of $\wt{M}_{\delta}$ 
for $C^{\infty}$-vector fields along $\tilde{\gamma}|_{[0,\,l]}$. 

\begin{lemma}\label{lem2.5.new1}
For any Jacobi field $X$ along $\gamma$ which is $g_{\dot{\gamma}}$-orthogonal to
$\dot{\gamma}$ and satisfies $X(0) = 0$ and $g_{\dot{\gamma}(l)} (X (l), X(l)) = 1$, we have
\[
\wt{I}_{l} (\wt{X}, \wt{X}) 
\ge I_{l}(X, X)
+ \frac{\delta}{f_{\delta}(l)^{2}} \int_{0}^{l} f_{\delta}(t)^{2}\,dt.
\]
\end{lemma}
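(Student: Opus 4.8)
The plan is to estimate both index forms by a common auxiliary integral in $f_{\delta}$, combining the radial flag curvature bound with the minimizing property of the Jacobi field $X$ along the minimal geodesic $\gamma$. On the model side, the classical surface‑of‑revolution computation for $\wt{X}=(f_{\delta}/f_{\delta}(l))\wt{E}$, together with $f_{\delta}''+(G-\delta)f_{\delta}=0$, gives
\[
\wt{I}_{l}(\wt{X},\wt{X})
= \frac{1}{f_{\delta}(l)^{2}}\int_{0}^{l}\big\{ f_{\delta}'(t)^{2}-G(t)f_{\delta}(t)^{2}\big\}\,dt
+ \frac{\delta}{f_{\delta}(l)^{2}}\int_{0}^{l}f_{\delta}(t)^{2}\,dt ,
\]
so it suffices to show $I_{l}(X,X)\le f_{\delta}(l)^{-2}\int_{0}^{l}\{f_{\delta}'(t)^{2}-G(t)f_{\delta}(t)^{2}\}\,dt$.

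To this end I would transplant $\wt{X}$ to $M$. Let $E$ be the vector field along $\gamma$ with $D_{\dot{\gamma}}^{\dot{\gamma}}E\equiv 0$ and $E(l)=X(l)$; since $\gamma$ is a geodesic, $g_{\dot{\gamma}}$ is parallel along $\gamma$, so $E$ is of unit $g_{\dot{\gamma}}$-length and $g_{\dot{\gamma}}$-orthogonal to $\dot{\gamma}$ on all of $[0,l]$. Put $Z(t):=(f_{\delta}(t)/f_{\delta}(l))E(t)$, so that $Z(0)=0=X(0)$ and $Z(l)=X(l)$. Then $D_{\dot{\gamma}}^{\dot{\gamma}}Z=(f_{\delta}'/f_{\delta}(l))E$, and since on $(0,l]$ the flag and the flag pole determined by $\{\dot{\gamma},Z\}$ coincide with those of $\{\dot{\gamma},E\}$ while $\gamma$ is a unit speed minimal geodesic emanating from $p$, the radial flag curvature bound yields
\[
g_{\dot{\gamma}}\big(R^{\dot{\gamma}}(Z,\dot{\gamma})\dot{\gamma},Z\big)=K_{M}\big(\dot{\gamma},E\big)\,\frac{f_{\delta}(t)^{2}}{f_{\delta}(l)^{2}}\ \ge\ G(t)\,\frac{f_{\delta}(t)^{2}}{f_{\delta}(l)^{2}} .
\]
Feeding this bound and $g_{\dot{\gamma}}(D_{\dot{\gamma}}^{\dot{\gamma}}Z,D_{\dot{\gamma}}^{\dot{\gamma}}Z)=f_{\delta}'(t)^{2}/f_{\delta}(l)^{2}$ into the index form gives $I_{l}(Z,Z)\le f_{\delta}(l)^{-2}\int_{0}^{l}\{f_{\delta}'(t)^{2}-G(t)f_{\delta}(t)^{2}\}\,dt$.

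It then remains to invoke the index (Morse) lemma: since $\gamma$ is minimal and $x=c(0)\notin\Cut(p)$, no conjugate point of $p$ occurs along $\gamma|_{(0,l]}$, so the Jacobi field $X$ minimizes the index form among $C^{\infty}$-vector fields with the same endpoint values, whence $I_{l}(X,X)\le I_{l}(Z,Z)$. Chaining the three estimates proves the lemma.

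I expect the delicate points to be, first, the parallelism of $g_{\dot{\gamma}}$ along the geodesic $\gamma$ — a standard feature of the Chern connection, but one that must be cited so that $E$ genuinely remains unit and $g_{\dot{\gamma}}$-orthogonal — and second, the exact formulation of the index lemma in the Finsler category, for which one should refer to \cite{BCS} and record the absence of conjugate points in $(0,l]$. The sign bookkeeping, namely that replacing $G$ by $G-\delta$ \emph{enlarges} the index form and produces exactly the nonnegative correction $\delta f_{\delta}(l)^{-2}\int_{0}^{l}f_{\delta}^{2}$, is then immediate from the displayed model formula.
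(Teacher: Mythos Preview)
Your proposal is correct and follows essentially the same approach as the paper: the paper likewise defines the parallel field $E$ with $E(l)=X(l)$, sets $Y(t)=f_{\delta}(t)E(t)/f_{\delta}(l)$ (your $Z$), applies the basic index lemma (citing \cite[Lemma~7.3.2]{BCS}) to obtain $I_{l}(Y,Y)\ge I_{l}(X,X)$, and then uses the radial flag curvature bound $K_M(\dot{\gamma},Y)\ge G$ to compare $\wt{I}_{l}(\wt{X},\wt{X})$ with $I_{l}(Y,Y)$ plus the $\delta$-term. The only difference is organizational---the paper runs the chain of inequalities in one display rather than separating the model computation from the transplantation---and your explicit remarks about the parallelism of $g_{\dot{\gamma}}$ along $\gamma$ and the absence of conjugate points on $(0,l]$ are the right justifications, tacitly assumed in the paper.
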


\begin{proof}
Let $E$ be the vector field along $\gamma$ such that $E(l) = X(l)$ and
$E':=D_{\dot{\gamma}}^{\dot{\gamma}}E \equiv 0$.
Put
\[
Y(t) := \frac{1}{f_{\delta}(l)} f_{\delta} (t) E(t)
\]
and note that $X(0) = Y(0)$ and $X(l) =Y(l)$.
Thus, since $X$ is a Jacobi field, it follows from the basic index lemma
(cf.~\cite[Lemma 7.3.2]{BCS}) that $I_l(Y,Y) \ge I_l(X,X)$.
Combining this with the radial curvature bound $K_M(\dot{\gamma}(t),Y(t)) \ge G(t)$
(recall \eqref{eq:flag} for the definition of the flag curvature $K_M$), we obtain
\begin{align*}
\wt{I}_{l} (\wt{X}, \wt{X}) 
&= \int^{l}_{0} 
\{\langle \wt{X}', \wt{X}' \rangle - (G - \delta) \|\wt{X}\|^{2} \} \,dt\\
&= \int^{l}_{0} \left\{ g_{\dot{\gamma}} (Y', Y')
 -(G - \delta) g_{\dot{\gamma}} (Y, Y) \right\} dt\\
&\ge \int^{l}_{0} \left\{ g_{\dot{\gamma}} (Y', Y')
 -K_M (\dot{\gamma}, Y) g_{\dot{\gamma}} (Y, Y) \right\} dt
 + \delta \int^{l}_{0} g_{\dot{\gamma}} (Y, Y) \,dt\\
&= I_{l} (Y, Y) + \frac{\delta}{f_{\delta}(l)^{2}} \int_{0}^{l}f_{\delta}(t)^{2}\,dt\\
&\ge I_{l} (X, X) + \frac{\delta}{f_{\delta}(l)^{2}} \int_{0}^{l}f_{\delta}(t)^{2}\,dt.
\end{align*}
$\qedd$
\end{proof}

Fix a geodesic $\tilde{c}:(-\ve,\ve) \lra \wt{M}_{\delta}$
with $\tilde{c}(0)=\tilde{x}$ such that
\begin{equation}\label{eq:c}
\angle\big( \dot{\tilde{\gamma}}(l),\dot{\tilde{c}}(0) \big) =\omega,
 \qquad \|\dot{\tilde{c}}\|=\lambda:=\max\left\{ 1,F\!\left( -\dot{c}(0) \big) \right. \right\},
\end{equation}
where $\omega$ is as in Remark~\ref{rem_2012_01_21_3.1}.
(There may be two choices for such $\tilde{c}$, whereas there is no difference between them
since $\wt{M}_{\delta}$ is a surface of revolution.)
This choice of $\tilde{c}$ is the key trick in dealing with the non-reversible case.
Let us consider the geodesic variation
\[ \tilde{\varphi}(t,s):=\exp_{\tilde{o}}\left( \frac{t}{l}\exp^{-1}_{\tilde{o}}\big( \tilde{c}(s) \big) \right),
 \qquad (t,s) \in [0,l] \times (-\ve,\ve). \]
By setting 
\[
\wt{J}(t) := \frac{\partial \wt{\varphi}}{\partial s} (t, 0),
\]
we get the Jacobi field $\wt{J}$ 
along $\tilde{\gamma}$ with $\wt{J}(0) = 0$ and $\wt{J}(l) = \dot{\tilde{c}}(0)$. 
Similarly to Lemma~\ref{lem2.3.new1}, the Jacobi field 
\[
\wt{J}^{\perp} (t) 
:= 
\wt{J}(t) 
- 
\frac{\langle 
\dot{\tilde{\gamma}}(l), \dot{\tilde{c}}(0)
\rangle}{l} t \dot{\tilde{\gamma}}(t)
\]
along $\tilde{\gamma}$ is orthogonal to $\dot{\tilde{\gamma}}(t)$ on $[0,l]$. 

\begin{lemma}\label{lem2.5}
Assume that $B^+_{\rho}(p) \cap B^-_{2r}(q) = \emptyset$ and 
$g_{\dot{\gamma}(l)}(v,v)  \ge F(v)^2$ for all $v \in T_xM$.
Then there exists $\delta_{1}= \delta_{1}(f,r)> 0$ such that, for any $\delta \in (0, \delta_{1})$, 
we have
\[
\wt{I}_{l} (\wt{J}^{\perp}, \wt{J}^{\perp}) 
-I_{l} (J^{\perp}, J^{\perp}) 
\ge 
\delta C_1 \sin^{2} \omega,
\]
where $J$ is as in Subsection~$\ref{sec3.1}$ and we set for later convenience
\[ C_1:=\frac{1}{2 f(l_{0})^{2}} \int_{0}^{l_{0}} f(t)^{2} \,dt, \qquad l_0:=d(p,q).  \]
\end{lemma}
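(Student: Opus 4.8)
The plan is to compare the two index forms by splitting each into its ``tangential'' and ``normal'' contributions relative to the respective radial geodesics, so that the parallel-component parts cancel (or are controlled) and the orthogonal-component parts are handled by Lemma~\ref{lem2.5.new1}. Concretely, recall that $\wt{J}(l)=\dot{\tilde{c}}(0)$ decomposes as $\dot{\tilde{c}}(0)=\cos\omega\,\|\dot{\tilde{c}}\|\,\dot{\tilde{\gamma}}(l)+\sin\omega\,\|\dot{\tilde{c}}\|\,\wt{E}(l)$ by \eqref{eq:c}, so $\wt{J}^\perp(l)=\lambda\sin\omega\,\wt{E}(l)$ and hence $\langle\wt{J}^\perp(l),\wt{J}^\perp(l)\rangle=\lambda^2\sin^2\omega$. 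On the $M$ side, by Remark~\ref{rem_2012_01_21_3.1} we have $\lambda^{-1}g_{\dot{\gamma}(l)}(\dot{\gamma}(l),\dot{c}(0))=\cos\omega$, i.e.\ $g_{\dot{\gamma}(l)}(\dot{\gamma}(l),\dot{c}(0))=\lambda\cos\omega$; combined with hypothesis $g_{\dot{\gamma}(l)}(v,v)\ge F(v)^2$ applied to $v=\dot{c}(0)$ (which gives $g_{\dot{\gamma}(l)}(\dot{c}(0),\dot{c}(0))\ge F(\dot{c}(0))^2=1$) and the definition of $J^\perp$ in Lemma~\ref{lem2.3.new1}, one gets $g_{\dot{\gamma}(l)}(J^\perp(l),J^\perp(l))=g_{\dot{\gamma}(l)}(\dot{c}(0),\dot{c}(0))-\lambda^2\cos^2\omega\ge 1-\lambda^2\cos^2\omega$. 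When $\lambda=1$ this is exactly $\sin^2\omega$; when $\lambda=F(-\dot{c}(0))>1$ a little more care is needed, but in all cases $g_{\dot{\gamma}(l)}(J^\perp(l),J^\perp(l))\ge\sin^2\omega$ (using $\lambda^2\cos^2\omega\le \lambda^2-\sin^2\omega$ is not automatic, so the genuine bound to use is $g(J^\perp(l),J^\perp(l))=g(\dot c(0),\dot c(0))-\lambda^2\cos^2\omega\ge 1-\cos^2\omega=\sin^2\omega$, which holds because $g(\dot c(0),\dot c(0))\ge\max\{1,\lambda^2\cos^2\omega+\sin^2\omega\}$ follows from $g(\dot c(0),\dot c(0))\ge 1$ and $g(\dot c(0),\dot c(0))\ge g(\dot\gamma(l),\dot c(0))^2/g(\dot\gamma(l),\dot\gamma(l))=\lambda^2\cos^2\omega$ by Cauchy--Schwarz).

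Next I would renormalize. Set $m:=g_{\dot{\gamma}(l)}(J^\perp(l),J^\perp(l))\ge\sin^2\omega$ and $X:=m^{-1/2}J^\perp$, a $g_{\dot{\gamma}}$-orthogonal Jacobi field along $\gamma$ with $X(0)=0$ and $g_{\dot{\gamma}(l)}(X(l),X(l))=1$, so Lemma~\ref{lem2.5.new1} applies and yields
\[
\wt{I}_l(\wt{X},\wt{X})\ge I_l(X,X)+\frac{\delta}{f_\delta(l)^2}\int_0^l f_\delta(t)^2\,dt,
\]
where $\wt{X}=f_\delta(l)^{-1}f_\delta(\cdot)\wt{E}$. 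Since $I_l$ is quadratic, $I_l(J^\perp,J^\perp)=m\,I_l(X,X)$. On the model side, because $\wt{M}_\delta$ is a surface of revolution, $\wt{J}^\perp$ is itself (up to scaling) the Jacobi field $f_\delta(l)^{-1}f_\delta(\cdot)\wt{E}$ times $\lambda\sin\omega$; more precisely $\wt{J}^\perp=\lambda\sin\omega\,\wt{X}$, hence $\wt{I}_l(\wt{J}^\perp,\wt{J}^\perp)=\lambda^2\sin^2\omega\,\wt{I}_l(\wt{X},\wt{X})$. Putting these together,
\[
\wt{I}_l(\wt{J}^\perp,\wt{J}^\perp)-I_l(J^\perp,J^\perp)
=\lambda^2\sin^2\omega\,\wt{I}_l(\wt{X},\wt{X})-m\,I_l(X,X).
\]
Using $\lambda\ge 1$ and $m\le\lambda^2\sin^2\omega$ — wait, that inequality goes the wrong way; here is where the sign bookkeeping must be done carefully, comparing $\lambda^2\sin^2\omega$ and $m$ and invoking $\wt{I}_l(\wt{X},\wt{X})\ge I_l(X,X)$ together with the fact that $\wt{I}_l(\wt X,\wt X)$ may be negative. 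The clean route is: $\lambda^2\sin^2\omega\ge m$ is false in general, but $\lambda^2\sin^2\omega\ge\sin^2\omega$ is true, and one estimates
\[
\wt{I}_l(\wt{J}^\perp,\wt{J}^\perp)-I_l(J^\perp,J^\perp)
\ge \sin^2\omega\bigl(\wt{I}_l(\wt X,\wt X)-I_l(X,X)\bigr)+(\lambda^2\sin^2\omega-\sin^2\omega)\wt{I}_l(\wt X,\wt X)+(\sin^2\omega-m)I_l(X,X),
\]
where the cross terms need the minimality/index-lemma inequality and the value of $\wt I_l(\wt X,\wt X)$ and $I_l(X,X)$ to be compared; ultimately the dominant surviving term is $\sin^2\omega\cdot\frac{\delta}{f_\delta(l)^2}\int_0^l f_\delta^2$ from Lemma~\ref{lem2.5.new1}.

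Finally I would pass from $f_\delta,l$ to the fixed constant $C_1$. Since $l=d(p,x)$ with $x\in B_r^{\pm}(q)$, we have $l\in(l_0-r,l_0+r)$; since $B^+_\rho(p)\cap B^-_{2r}(q)=\emptyset$ forces $l>\rho$ (so $f'<0$ on the relevant range and $f$ is positive and decreasing there), a continuity/compactness argument together with $\lim_{\delta\downarrow 0}f_\delta=f$ (uniformly on compacts) gives the existence of $\delta_1=\delta_1(f,r)>0$ such that for all $\delta<\delta_1$,
\[
\frac{1}{f_\delta(l)^2}\int_0^l f_\delta(t)^2\,dt\ \ge\ \frac{1}{2f(l_0)^2}\int_0^{l_0}f(t)^2\,dt\ =\ C_1.
\]
Combining, $\wt{I}_l(\wt{J}^\perp,\wt{J}^\perp)-I_l(J^\perp,J^\perp)\ge\delta C_1\sin^2\omega$, as claimed. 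The main obstacle, as flagged above, is the non-reversibility bookkeeping: correctly relating $\lambda$, $\omega$, the $g_{\dot{\gamma}(l)}$-norm of $\dot{c}(0)$, and the normalizations of $J^\perp$ versus $\wt{J}^\perp$ so that the extra $\delta$-term from Lemma~\ref{lem2.5.new1} survives with the clean factor $\sin^2\omega$ rather than something like $m$ or $\lambda^{-2}\sin^2\omega$; hypothesis $g_v(v,v)\ge F(v)^2$ is exactly what is needed to push $m$ down to $\sin^2\omega$, and the choice $\|\dot{\tilde c}\|=\lambda$ in \eqref{eq:c} is what keeps the model side compatible.
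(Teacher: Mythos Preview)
Your overall strategy---normalize $J^\perp$, apply Lemma~\ref{lem2.5.new1}, then pass to $C_1$ by continuity---matches the paper's, but the sign bookkeeping you yourself flag as unresolved is a genuine gap, and two specific ingredients are missing. First, your bound on $m:=g_{\dot\gamma(l)}(J^\perp(l),J^\perp(l))$ is argued incorrectly: from $g_{\dot\gamma(l)}(\dot c(0),\dot c(0))\ge\max\{1,\lambda^2\cos^2\omega\}$ you cannot conclude $g_{\dot\gamma(l)}(\dot c(0),\dot c(0))\ge\lambda^2\cos^2\omega+\sin^2\omega$ (take $\lambda=2$, $\cos^2\omega=1/10$: then $\max\{1,0.4\}=1<1.3$). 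The correct move is to apply the hypothesis $g_{\dot\gamma(l)}(v,v)\ge F(v)^2$ also to $v=-\dot c(0)$, giving $g_{\dot\gamma(l)}(\dot c(0),\dot c(0))=g_{\dot\gamma(l)}(-\dot c(0),-\dot c(0))\ge F(-\dot c(0))^2$ and hence $g_{\dot\gamma(l)}(\dot c(0),\dot c(0))\ge\lambda^2$. This yields the stronger inequality $a:=m-\lambda^2\sin^2\omega\ge 0$, whose \emph{direction} is precisely what is needed.

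Second, you never use the sign of $\wt I_l(\wt X,\wt X)$; this is where the assumption $B^+_\rho(p)\cap B^-_{2r}(q)=\emptyset$ enters the index estimate itself, not only the final $C_1$ step. Since $x\in B^-_r(q)$ gives $l\ge\rho+r>\rho_\delta$ for small $\delta$, one computes directly $\wt I_l(\wt X,\wt X)=f_\delta(l)^{-2}\int_0^l\{(f'_\delta)^2+f_\delta f''_\delta\}\,dt=f'_\delta(l)/f_\delta(l)<0$. With $a\ge 0$ and $\wt I_l(\wt X,\wt X)<0$ the argument closes cleanly: Lemma~\ref{lem2.5.new1} applied to $J^\perp/\sqrt m$ gives $I_l(J^\perp,J^\perp)\le m\,\wt I_l(\wt X,\wt X)-m\delta f_\delta(l)^{-2}\int_0^l f_\delta^2$, whence
\[
\wt I_l(\wt J^\perp,\wt J^\perp)-I_l(J^\perp,J^\perp)
\ \ge\ -a\,\wt I_l(\wt X,\wt X)+\frac{\delta m}{f_\delta(l)^2}\int_0^l f_\delta^2\,dt
\ \ge\ \frac{\delta\sin^2\omega}{f_\delta(l)^2}\int_0^l f_\delta^2\,dt.
\]
Your alternative decomposition instead produces the term $(\lambda^2-1)\sin^2\omega\,\wt I_l(\wt X,\wt X)\le 0$, which cannot be discarded without exactly these two facts; the phrase ``ultimately the dominant surviving term is $\ldots$'' hides rather than resolves the issue.
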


\begin{proof}
First of all, $x \in B^-_r(q)$ and $B^+_{\rho}(p) \cap B^-_{2r}(q) = \emptyset$ ensure
$l=d(p,x) \ge \rho+r$.
Thus $l>\rho_{\delta}$ holds for $\delta \in (0,\delta_1)$ with sufficiently small 
$\delta_1=\delta_1(f,r)>0$.

We shall show
\begin{equation}\label{lem2.5_2012_02_05_inequ}
\wt{I}_{l} (\wt{J}^{\perp}, \wt{J}^{\perp}) 
- I_{l} (J^{\perp}, J^{\perp}) 
\ge
\frac{\delta \sin^{2} \omega}{f_{\delta}(l)^{2}} \int_{0}^{l}f_{\delta}(t)^{2}\,dt.
\end{equation}
Recall from Remark~\ref{rem_2012_01_21_3.1}
that $g_{\dot{\gamma}(l)}(\dot{\gamma}(l),\dot{c}(0))=\lambda \cos\omega$.
From \eqref{eq:c}, we observe
\[ \wt{J}^{\perp} (l) = \dot{\tilde{c}}(0) 
-\lambda \cos \omega \cdot \dot{\tilde{\gamma}}(l) = \pm \lambda \sin \omega \cdot \wt{E}(l) 
= \pm \lambda \sin \omega \cdot \wt{X}(l), \]
where $\wt{E}$ and $\wt{X}$ are as in the paragraph preceding Lemma~\ref{lem2.5.new1}.
Since both $\wt{J}^{\perp}$ and $\wt{X}$ are Jacobi fields (on $\wt{M}_{\delta}$), this implies
$\wt{J}^{\perp}(t) = \pm \lambda \sin \omega \cdot \wt{X}(t)$ on $[0, l]$. 
Hence we obtain
\begin{equation}\label{lem2.5-2}
\wt{I}_{l} (\wt{J}^{\perp}, \wt{J}^{\perp}) 
= (\lambda \sin\omega)^2 \wt{I}_{l} (\wt{X}, \wt{X}).
\end{equation}
By Lemma~\ref{lem2.3.new1}, we have
\[ g_{\dot{\gamma}(l)} \big( J^{\perp}(l),J^{\perp}(l) \big) = 
g_{\dot{\gamma}(l)} \big( \dot{c} (0), \dot{c} (0) \big) -(\lambda \cos \omega)^2. \]
By the hypothesis
$g_{\dot{\gamma}(l)}(\dot{c}(0),\dot{c}(0)) \ge \max\{ F(\dot{c}(0))^2, F(-\dot{c}(0))^2 \}=\lambda^2$,
\[
a:=g_{\dot{\gamma}(l)} \big( J^{\perp}(l),J^{\perp}(l) \big) -(\lambda \sin \omega)^2 \ge 0
\]
holds. Thus Lemma~\ref{lem2.5.new1} shows
\[ \wt{I}_l(\wt{X},\wt{X}) \ge \frac{I_l(J^{\perp},J^{\perp})}{a+(\lambda \sin \omega)^2}
 +\frac{\delta}{f_{\delta}(l)^2} \int_0^l f_{\delta}(t)^2 \,dt. \]
Combining this with \eqref{lem2.5-2}, we obtain
\begin{align*}
\wt{I}_l(\wt{J}^{\perp},\wt{J}^{\perp}) -I_l(J^{\perp},J^{\perp})
&\ge -a\wt{I}_l(\wt{X},\wt{X})
 +\frac{\delta \{ a+(\lambda \sin \omega)^2 \}}{f_{\delta}(l)^2} \int_0^l f_{\delta}(t)^2 \,dt \\
&\ge -a\wt{I}_l(\wt{X},\wt{X})
 +\frac{\delta \sin^2 \omega}{f_{\delta}(l)^2} \int_0^l f_{\delta}(t)^2 \,dt.
\end{align*}
Since $l>\rho_{\delta}$, $f'_\delta (l) < 0$ holds from \eqref{2013_09_16_rem1}. 
Hence we get 
\[
\wt{I}_l(\wt{X},\wt{X})
=\frac{1}{f_{\delta}(l)^2} \int_0^l \{ (f'_{\delta})^2 +f_{\delta} f''_{\delta} \} \,dt
=\frac{f'_{\delta}(l)}{f_{\delta}(l)}<0,
\]
which completes the proof of \eqref{lem2.5_2012_02_05_inequ}.

Since $|l-l_0| \le \max\{d(q,x),d(x,q)\}<r$ and $l,l_0 >\rho$,
taking smaller $\delta_1(f,r)>0$ if necessary, we have
\[ \frac{1}{f_{\delta}(l)^{2}} \int_{0}^{l}f_{\delta}(t)^{2}\,dt 
> 
\frac{1}{2f(l_{0})^{2}} \int_{0}^{l_{0}}f(t)^{2}\,dt \]
for all $\delta \in (0, \delta_{1})$. 
$\qedd$
\end{proof}

Put $L(s):=d(p,c(s))$ as in Lemma~\ref{lem2.3}, as well as
$\wt{L}(s):=\tilde{d}_{\delta}(\tilde{o},\tilde{c}(s))$.

\begin{lemma}[Key lemma]\label{lem2.6} 
In addition to the assumptions in Lemma~$\ref{lem2.5}$,
we assume that $\cT_M(\dot{\gamma}(l), \dot{c}(0)) = 0$.
Then, for each $\delta \in (0, \delta_{1})$ and $\theta \in (0, \pi/2)$,  
there exists $\ve' =\ve'(M,l,f,\ve,\delta,\theta) \in (0, \ve)$ such that
$L(s) \le \wt{L}(s)$ holds for all $s \in [-\ve', \ve']$,
provided that $\sin \omega \ge \sin \theta$.
\end{lemma}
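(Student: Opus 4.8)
The plan is to introduce $\psi(s) := \wt{L}(s) - L(s)$ and show that it vanishes to first order at $s=0$ with a strictly positive second derivative there, so that a Taylor/integration argument forces $\psi \ge 0$ on a small symmetric interval around $0$. Almost all of the geometric content has already been packaged into Lemmas~\ref{lem2.3} and \ref{lem2.5}; the remaining work is to assemble the zeroth-, first-, and second-order data at $s=0$ and then run an elementary one-variable estimate.

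First I would record the low-order data. Since $\tilde{x}$ was chosen with $\tilde{d}_\delta(\tilde{o},\tilde{x}) = l = d(p,x)$, we have $\psi(0) = \wt{L}(0) - L(0) = 0$. For the first derivatives, Lemma~\ref{lem2.3} gives $L'(0) = g_{\dot{\gamma}(l)}(\dot{\gamma}(l),\dot{c}(0))$, which equals $\lambda\cos\omega$ by Remark~\ref{rem_2012_01_21_3.1}; on the model side, the Riemannian first variation formula together with \eqref{eq:c} gives $\wt{L}'(0) = \langle \dot{\tilde{\gamma}}(l),\dot{\tilde{c}}(0) \rangle = \|\dot{\tilde{c}}(0)\|\cos\omega = \lambda\cos\omega$ as well. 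Hence $\psi'(0) = 0$.

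Next comes the second-order comparison, which is where the hypotheses of the lemma (inherited from Lemma~\ref{lem2.5}, namely $B^+_{\rho}(p)\cap B^-_{2r}(q)=\emptyset$ and $g_{\dot{\gamma}(l)}(v,v)\ge F(v)^2$) and the new assumption $\cT_M(\dot{\gamma}(l),\dot{c}(0))=0$ really enter. By Lemma~\ref{lem2.3} and $\cT_M(\dot{\gamma}(l),\dot{c}(0))=0$ we get $L''(0) = I_l(J^{\perp},J^{\perp})$, while on $\wt{M}_\delta$, which is Riemannian so that the tangent-curvature term is absent, the corresponding second variation formula yields $\wt{L}''(0) = \wt{I}_l(\wt{J}^{\perp},\wt{J}^{\perp})$. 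Therefore
\[ \psi''(0) \;=\; \wt{I}_l(\wt{J}^{\perp},\wt{J}^{\perp}) - I_l(J^{\perp},J^{\perp}) \;\ge\; \delta C_1 \sin^2\omega \;\ge\; \delta C_1 \sin^2\theta \;=:\; 2\kappa \;>\; 0, \]
where the first inequality is Lemma~\ref{lem2.5} and the second uses $\sin\omega \ge \sin\theta > 0$ with $\omega\in[0,\pi]$.

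Finally I would close with the Taylor argument. Because $c(s)$ stays inside $B^{\pm}_r(q)$, which avoids $\Cut(p)\cup\{p\}$ by construction, $L$ is $C^\infty$ near $0$; likewise $\wt{L}$ is smooth near $0$ (this smoothness is exactly what makes $\tilde{\varphi}$ and the Jacobi field $\wt{J}$ well-defined). Hence $\psi\in C^2$ near $0$, $\psi''$ is continuous with $\psi''(0)\ge 2\kappa$, and we may choose $\ve'=\ve'(M,l,f,\ve,\delta,\theta)\in(0,\ve)$ so small that $\psi''(s)\ge\kappa$ for all $|s|\le\ve'$. Integrating twice from $0$ using $\psi(0)=\psi'(0)=0$ gives $\psi(s)\ge \tfrac{\kappa}{2}s^2\ge 0$ on $[-\ve',\ve']$, i.e.\ $L(s)\le\wt{L}(s)$ there. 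The only point demanding care is the bookkeeping of what $\ve'$ may depend on: it is governed by the modulus of continuity of $\psi''$ near $0$, which is controlled by $M$ and $l$ (the geometry of $\gamma$ and $c$), by $f$ and $\delta$ (through $\wt{M}_\delta$ and $\wt{L}$), by $\ve$ (the domain on which everything is defined), and by $\theta$ (through the size of $\kappa$); since we only need \emph{existence} of such an $\ve'$ for fixed data, no uniformity is required and the argument goes through.
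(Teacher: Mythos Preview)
Your proof is correct and follows essentially the same route as the paper: both compute $\psi(0)=\psi'(0)=0$ from the first variation formula and the choice \eqref{eq:c}, invoke Lemma~\ref{lem2.3} with $\cT_M(\dot{\gamma}(l),\dot{c}(0))=0$ and Lemma~\ref{lem2.5} to bound $\psi''(0)\ge\delta C_1\sin^2\theta$, and then run a second-order Taylor argument. The only cosmetic difference is that the paper bounds the cubic remainders of $L$ and $\wt{L}$ separately by explicit constants $C_2(M,l)$ and $C_3(f,l)$ and then writes down $\ve'=\min\{\ve,\ \delta C_1\sin^2\theta/(2(C_2+C_3))\}$, whereas you appeal directly to continuity of $\psi''$; the paper's version makes the dependency list $\ve'=\ve'(M,l,f,\ve,\delta,\theta)$ visible at a glance, but the content is the same.
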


\begin{proof} 
Set
\[ \cR(s) := L(s) - \left\{ L(0) + L'(0)s + \frac{1}{2} L''(0)s^{2} \right\}. \]
By virtue of $\cT_M(\dot{\gamma}(l), \dot{c}(0)) = 0$,
it follows from Lemma~\ref{lem2.3} and Remark~\ref{rem_2012_01_21_3.1} that  
\[ L(s) =l+s \lambda \cos \omega + \frac{s^{2}}{2} I_{l} (J^{\perp}, J^{\perp}) +\cR (s). \]
Since $\cR(s)=O(|s|^3)$ and $c$ lives in a bounded domain,
there exists $C_2=C_2(M,l)>0$ such that
$|\cR(s)| \le C_2|s|^3$ for all $s \in (-\ve,\ve)$.
This yields
\begin{equation}\label{lem2.6-10} 
L(s) \le l+s \lambda \cos \omega + \frac{s^{2}}{2} I_{l} (J^{\perp}, J^{\perp})
+ C_2 |s|^3
\end{equation}
for $s \in (-\ve,\ve)$.
Now, by the same argument on $\wt{M}_{\delta}$, we obtain
\[ \wt{L}(s) \ge l+s \lambda \cos \omega + \frac{s^{2}}{2} 
\wt{I}_{l} (\wt{J}^{\perp}, \wt{J}^{\perp}) -C_{3} |s|^{3} \]
for some $C_3=C_3(f,l)>0$ and all $s \in (-\ve,\ve)$.
Combining this with \eqref{lem2.6-10} and Lemma~\ref{lem2.5}, we have
\begin{align*}
\wt{L}(s) - L(s) 
&\ge
\frac{s^{2}}{2} 
\left\{
\wt{I}_{l} (\wt{J}^{\perp}, \wt{J}^{\perp}) 
- I_{l} (J^{\perp}, J^{\perp})
\right\} - (C_2 + C_3)|s|^{3}\\
&\ge
\frac{\delta C_{1} \sin^{2} \omega}{2} s^{2}
- (C_2 + C_3)|s|^{3}.
\end{align*}
Since $\sin\omega \ge \sin\theta$ by hypothesis, we further observe
\[ \wt{L}(s) - L(s) \ge \frac{s^2}{2}\{ \delta C_1 \sin^2 \theta -2(C_2+C_3)s \}. \]
Therefore choosing
\[ \ve' := \min \left\{ \ve, \frac{\delta C_{1} \sin^2 \theta}{2(C_2+C_3)} \right\} \]
shows $L(s) \le \wt{L}(s)$ for all $s \in [-\ve',\ve']$.
$\qedd$
\end{proof}

See Remark~\ref{rm:zero} below for the case of $\sin\omega=0$.
We remark that, because $\lim_{\delta \to 0}\ve'=0$,
one can not take the limit as $\delta \to 0$ (i.e., $\wt{M}_{\delta} \to \wt{M}$) at this stage.

\section{Weak TCT for thin triangles}\label{sec4}

From Lemma~\ref{lem2.6}, we immediately derive TCT for thin triangles outside the cut locus of $p$,
with respect to the model surface $(\wt{M}_{\delta},\tilde{o})$.
Let $(M,F,p)$, $(\wt{M},\tilde{p})$ and $(\wt{M}_{\delta},\tilde{o})$
be as in Subsection~\ref{sec3.2}.
We remark that the sector 
\[ \wt{V}_\delta (\pi) : = \{ \tilde{x} \in \wt{M}_\delta  \, | \, 0 \le \theta(\tilde{x}) < \pi \} \]
has no pair of cut points (see Remark~\ref{rem5.1_2012_02_07}).
Take $\ve'' \in (0,\ve']$ such that
$\tilde{c}([-\ve'',\ve'']) \subset \wt{V}_{\delta}(\pi)$ always holds
for $\tilde{c}$ as in Subsection~\ref{sec3.2} (by rotating $\wt{M}_{\delta}$ if necessary).

\begin{lemma}[Weak TCT]\label{2011_07_08_lem3.10}
In the same situation as Lemma~$\ref{lem2.6}$,
we further assume that
\begin{enumerate}[{\rm (1)}]
\item
$g_v(w,w) \ge F(w)^2$ for all $s \in [-\ve'',\ve'']$, $v \in \cG_p(c(s))$ and $w \in T_{c(s)}M$,
\item
$\cT_M(v,\dot{c}(s))=0$ for any $s \in [-\ve'',\ve'']$ and $v \in \cG_p(c(s))$,
and $\bar{c}(s):=c(\ve''-s)$ $(s \in [-\ve'',\ve''])$ is geodesic.
\end{enumerate}
By choosing smaller $\ve''$ $($depending on $M$ and $l)$ if necessary,
we can also assume that $\bar{c}$ is minimal.
Take $a \in (0,\ve'']$ and put $y:=c(a)$.
Then the forward triangle $\triangle (\ora{px}, \ora{py}) \subset M$ admits
a comparison triangle $\triangle (\tilde{o}\tilde{x} \tilde{y})$ in $\wt{M}_{\delta}$
such that $\ora{\angle} x \ge \angle \tilde{x}$ and $\ola{\angle} y \ge \angle \tilde{y}$.
\end{lemma}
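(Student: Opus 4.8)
The plan is to feed the estimate of Lemma~\ref{lem2.6} into an intermediate value argument carried out inside the sector $\wt{V}_{\delta}(\pi)$, which contains no pair of cut points. The additional content of hypotheses (1) and (2), beyond what already went into Lemma~\ref{lem2.6}, is only what is needed to make this argument run at both endpoints $x$ and $y$ at once and to identify the three side lengths correctly in the non-reversible setting.

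Set $l:=d(p,x)$, $L(s):=d(p,c(s))$ and $\lambda:=\max\{1,F(-\dot c(0))\}$. Since $\bar c$ is geodesic, $F(-\dot c(s))$ is constant in $s$, hence $d_{\rm{m}}(x,y)=L_{\rm{m}}(c|_{[0,a]})=a\lambda$; and by Remark~\ref{rem_2012_01_21_3.1} the angle $\omega$ of Subsection~\ref{sec3.2} attached to $c$ at $x$ is $\omega=\pi-\ora{\angle}x$. Fix $\tilde o,\tilde x\in\wt{M}_{\delta}$ with $\tilde d_{\delta}(\tilde o,\tilde x)=l$, let $\tilde\gamma$ be the meridian from $\tilde o$ to $\tilde x$, and for $\psi\in[0,\ora{\angle}x]$ let $\tilde c_{\psi}:[0,a]\lra\wt{M}_{\delta}$ be the constant speed geodesic with $\tilde c_{\psi}(0)=\tilde x$, $\|\dot{\tilde c}_{\psi}\|=\lambda$ and $\angle(-\dot{\tilde\gamma}(l),\dot{\tilde c}_{\psi}(0))=\psi$, all on one fixed side of $\tilde\gamma$. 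Shrinking $\ve''$ (hence $a$) we may assume $\tilde c_{\psi}([0,a])\subset\wt{V}_{\delta}(\pi)$ for every such $\psi$, so that $\tilde c_{\psi}|_{[0,a]}$ is minimal and the geodesic triangle with vertices $\tilde o$, $\tilde x$, $\tilde c_{\psi}(a)$ has interior angle $\psi$ at $\tilde x$. The geodesic $\tilde c_{\ora{\angle}x}$ is the $\tilde c$ of Lemma~\ref{lem2.6} (its angle with $\dot{\tilde\gamma}(l)$ is $\pi-\ora{\angle}x=\omega$), so Lemma~\ref{lem2.6} (applicable as $\ve''\le\ve'$, with (1), (2), and the nondegeneracy $\sin\omega\ge\sin\theta$) gives $d(p,y)=L(a)\le\tilde d_{\delta}(\tilde o,\tilde c_{\ora{\angle}x}(a))$.

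For the continuity step, shrink $\ve''$ further so that $a\lambda<l$; then $\tilde c_{0}$ runs backward along $\tilde\gamma$, whence $\tilde d_{\delta}(\tilde o,\tilde c_{0}(a))=l-a\lambda\le L(a)$, because $d(p,x)\le d(p,y)+d(y,x)\le L(a)+d_{\rm{m}}(x,y)=L(a)+a\lambda$. Since $\psi\mapsto\tilde d_{\delta}(\tilde o,\tilde c_{\psi}(a))$ is continuous on $[0,\ora{\angle}x]$ with a value $\le L(a)$ at $\psi=0$ and a value $\ge L(a)$ at $\psi=\ora{\angle}x$, there is $\psi^{*}\in[0,\ora{\angle}x]$ with $\tilde d_{\delta}(\tilde o,\tilde c_{\psi^{*}}(a))=L(a)$. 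Putting $\tilde y:=\tilde c_{\psi^{*}}(a)$ yields a triangle $\triangle(\tilde o\tilde x\tilde y)$ with side lengths $d(p,x)$, $d_{\rm{m}}(x,y)=L_{\rm{m}}(c)$ and $d(p,y)$, i.e.\ a comparison triangle of $\triangle(\ora{px},\ora{py})$, whose interior angle at $\tilde x$ is $\psi^{*}\le\ora{\angle}x$. Running the identical construction with the unit speed reparametrisation of the reverse edge $\bar c$, based at $p$ and issuing from $y$ — here $F(-\dot{\bar c})$ is again constant, the model edge length comes out once more to be $a\lambda$, and by Remark~\ref{rem_2012_01_21_3.1} together with the definition \eqref{forwardcos2} of the backward angle the attached angle at $y$ is $\pi-\ola{\angle}y$ — produces a comparison triangle whose angle at the vertex over $y$ is $\le\ola{\angle}y$. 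By uniqueness of the comparison triangle up to an isometry of $\wt{M}_{\delta}$ these two triangles are congruent, and hence $\ora{\angle}x\ge\angle\tilde x$ and $\ola{\angle}y\ge\angle\tilde y$ both hold for $\triangle(\tilde o\tilde x\tilde y)$.

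I expect the main obstacle, once Lemma~\ref{lem2.6} is in hand, to be the bookkeeping imposed by non-reversibility: checking that $d_{\rm{m}}(x,y)$, $L_{\rm{m}}(c)$ and the model edge length $a\lambda$ really coincide — which is exactly where the hypothesis in (2) that $\bar c$ is geodesic (forcing $F(-\dot c)$ to be constant) is used — and arguing that the construction from the $x$-end and the one from the $y$-end output congruent comparison triangles, so that both angle bounds refer to the same triangle; for the latter one must also see that hypotheses (1) and (2) persist for the reversed edge, in particular that the relevant tangent curvature still vanishes, which follows from the scaling behaviour of $\cT_{M}$ in its second argument. A subsidiary, routine matter is to keep all the auxiliary model geodesics $\tilde c_{\psi}|_{[0,a]}$ inside $\wt{V}_{\delta}(\pi)$, which is arranged by taking $\ve''$ small and uses that this sector has no pair of cut points.
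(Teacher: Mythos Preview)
Your argument is correct and follows the same line as the paper's: both feed the inequality $L(a)\le\wt L(a)$ of Lemma~\ref{lem2.6} into a monotonicity argument inside $\wt V_\delta(\pi)$ to get $\angle\tilde x\le\ora\angle x$, with the analogous estimate at $y$ giving the second bound. Two minor differences are worth noting. First, the paper establishes existence of the comparison triangle directly from the triangle inequality $d_{\rm m}(x,y)\le d(p,x)+d(p,y)$, proved via the auxiliary Riemannian metric $g_V=g_{\nabla d(p,\cdot)}$ (see \eqref{eq:tri}), rather than through your intermediate value step; the two are equivalent. Second, for the bound at $y$ the paper simply recenters Lemma~\ref{lem2.6} at $y$ and uses the \emph{original} curve $c$ in the negative-$s$ direction (i.e., $L(-a)\le\wt L(-a)$), which needs only hypothesis~(2) verbatim. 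You instead pass to the unit-speed reversed curve, which obliges you to check $\cT_M(v,\dot{\hat c}(0))=0$. Your stated reason, ``scaling behaviour of $\cT_M$ in its second argument'', is not quite right: positive homogeneity of the Chern connection gives $\cT_M(v,\alpha w)=\alpha^2\cT_M(v,w)$ only for $\alpha>0$. What actually makes the claim true is that \emph{both} $c$ and $\bar c$ are geodesic, so $D^{\dot c}_{\dot c}\dot c=0$ and $D^{-\dot c}_{\dot c}\dot c=0$ hold simultaneously; using the extension of $\dot c$ along $c$, both $\cT_M(v,\dot c(a))$ and $\cT_M(v,-\dot c(a))$ then reduce to $-g_v(D^v_{\dot c(a)}\dot c,v)$ and hence coincide. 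With this correction your route is sound, though the paper's negative-$s$ manoeuvre sidesteps the issue entirely.
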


\begin{proof}
Since $\bar{c}$ is geodesic and minimal,
$F(-\dot{c})$ is constant and $L_{\rm{m}}(c)=d_{\rm{m}}(x,y)=\lambda d(x,y)$.
Putting $V:=\nabla d(p,\cdot)$ on $M \setminus \Cut(p)$
and denoting by $d_V$ the distance function with respect to $g_V$, we have
\begin{equation}\label{eq:tri}
L_{\rm{m}}(c)=d_{\rm{m}}(x,y) \le d_V(x,y) \le d_V(x,p)+d_V(p,y) =d(p,x)+d(p,y).
\end{equation}
Thanks to $d(x,y)=a \le \ve''$ and the triangle inequality \eqref{eq:tri},
we can take a comparison triangle $\triangle (\tilde{o}\tilde{x} \tilde{y})$
with $0=\theta(\tilde{x}) \le \theta(\tilde{y}) <\pi$.
Comparing $\tilde{d}_{\delta}(\tilde{o},\tilde{y})=d(p,y)$ with $L(a) \le \wt{L}(a)$ in Lemma~\ref{lem2.6},
we find $\angle \tilde{x} \le \pi-\omega=\ora{\angle} x$.
We obtain $\angle \tilde{y} \le \ola{\angle} y$ by a similar argument
(via $L(-a) \le \wt{L}(-a)$ in Lemma~\ref{lem2.6}, beginning from $y$ instead of $x$).
$\qedd$
\end{proof}

\begin{remark}\label{rm:zero}
We excluded the case of $\sin\omega=0$ (i.e., $g_{\dot{\gamma}(l)}(\dot{\gamma}(l),\dot{c}(0))=\pm \lambda$)
in Lemma~\ref{2011_07_08_lem3.10} without loss of generality.
If $\sin\omega=0$, then we observe from the estimates in \eqref{eq:angle} that
$\dot{\gamma}(l)=\dot{c}(0)$ with $F(\dot{c}(0)) \ge F(-\dot{c}(0))$
($\ora{\angle}x=\pi$, $\ola{\angle}y=0$) or
$\dot{\gamma}(l)=-\dot{c}(0)/F(-\dot{c}(0))$ with $F(\dot{c}(0)) \le F(-\dot{c}(0))$
($\ora{\angle}x=0$, $\ola{\angle}y=\pi$).
Since $\bar{c}$ is also geodesic, $c$ is contained in a geodesic passing through $p$ in both cases.
Thus TCT clearly holds.
\end{remark}

\begin{remark}\label{rm:conj}
Lemma~\ref{2011_07_08_lem3.10} holds true also in the case of
$B^-_{2r}(q) \cap (\Conj(p) \cup \{p\}) =\emptyset$,
where $\Conj(p)$ denotes the conjugate locus of $p$.
We put $\omega=\pi-\ora{\angle}(pxc(\ve))$ and choose $\gamma$ from $p$ to $x$
such that $g_{\dot{\gamma}(l)}(\dot{\gamma}(l),\dot{c}(0))=\lambda \cos\omega$.
Consider the geodesic $\tilde{c}:[0,\ve) \lra \wt{M}_{\delta}$ as in \eqref{eq:c}
and compare $\wt{L}(s)$ with $L(s):=\int_0^l F((\partial \varphi/\partial t)(t,s)) \,dt$,
where $t \longmapsto \varphi(t,s)$ is the geodesic segment from $p$ to $c(s)$
such that $(\partial \varphi/\partial t)(0,s)$ is in a neighborhood of $\dot{\gamma}(0)$
on which $\exp_p$ is diffeomorphic.
Then the same argument as Subsection~\ref{sec3.2} shows
$\wt{L}(s) \ge L(s) \ge d(p,c(s))$ for all sufficiently small $s>0$,
and hence the analogue of Lemma~\ref{2011_07_08_lem3.10} holds.
\end{remark}

\section{Double triangle lemma}\label{sec5}

Throughout this section, let $(\wt{M}, \tilde{p})$ be a von Mangoldt surface of revolution.
The following fact on the cut loci of $\wt{M}$ is important.

\begin{remark}\label{rem5.1_2012_02_07}
The cut locus $\Cut (\tilde{x})$ of $\tilde{x} \not= \tilde{p}$ is either an empty set, 
or a ray properly contained in the meridian $\theta^{-1} (\theta (\tilde{x}) + \pi)$ opposite to $\tilde{x}$.
Moreover, the endpoint of $\Cut(\tilde{x})$ is the first conjugate point to $\tilde{x}$ 
along the minimal geodesic from $\tilde{x}$ passing through $\tilde{p}$ (\cite[Main Theorem]{T}).
\end{remark}

We start with simple lemmas.

\begin{lemma}\label{2011_06_24_lem4.1}
Take $\tilde{x} \in \wt{M} \setminus \{\tilde{p}\}$ and 
let $\tilde{\gamma} :[0, \infty) \lra \wt{M}$ be the meridian passing through $\tilde{x}$.
Fix $t>0$ and put $\tilde{y}:=\tilde{\gamma}(t)$.
Then we have $\tilde{d} (\tilde{x},\tilde{y}) < \tilde{d} (\tilde{x},\tilde{z})$
for all $\tilde{z} \in \partial B_t (\tilde{p}) \setminus \{\tilde{y}\}$. 
\end{lemma}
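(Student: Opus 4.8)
The plan is to reduce the statement to a comparison between the distance function $t \mapsto \tilde d(\tilde x,\tilde\gamma(t))$ along the meridian and the distance to $\tilde x$ from nearby points on the same geodesic circle $\partial B_t(\tilde p)$, using the explicit form of the metric $d\tilde s^2 = dt^2 + f(t)^2\,d\theta^2$ together with the von Mangoldt hypothesis ($G$ non-increasing, equivalently $f$ log-concave wherever $f'<0$). Write $\tilde x = \tilde\gamma(t_0)$ for the meridian $\tilde\gamma$ (so $\theta(\tilde x) = \theta_0$ is fixed) and, for $\tilde z \in \partial B_t(\tilde p)$, let $\theta(\tilde z) = \theta_0 + \psi$ with $\psi \in (-\pi,\pi]$; by the rotational symmetry of $\wt M$ the distance $\tilde d(\tilde x,\tilde z)$ depends only on $|\psi|$ and on $t$, so it suffices to show this quantity, as a function of $|\psi| \in (0,\pi]$, is strictly greater than its value at $\psi = 0$, namely $|t - t_0| = \tilde d(\tilde x,\tilde y)$.

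First I would handle the easy sub-case $\tilde z$ on the same meridian as $\tilde x$ but on the far ray, i.e.\ $\psi = \pi$: then any minimal geodesic from $\tilde x$ to $\tilde z$ either passes through $\tilde p$, giving length $t_0 + t > |t - t_0|$, or (if $\tilde z$ lies beyond the cut point) is longer still, so the inequality is clear. For the generic case $0 < |\psi| < \pi$, I would argue that a minimal geodesic $\tilde\sigma$ from $\tilde x$ to $\tilde z$ cannot be a sub-arc of $\tilde\gamma$ (since $\tilde z \notin \tilde\gamma$), hence it genuinely leaves the meridian; then I would invoke the first variation / Gauss-lemma type estimate, or more concretely the Clairaut relation on the surface of revolution, to compare $\tilde\sigma$ with the competitor path that first runs along $\tilde\gamma$ from $\tilde x$ to the point of $\tilde\gamma$ at radius $t$ and then along the circle $\partial B_t(\tilde p)$ to $\tilde z$: the length of the latter is $|t-t_0| + \int$(arc of circle)$> |t-t_0|$, which is not by itself enough, so the real content is that the true minimizer $\tilde\sigma$ also has length strictly exceeding $|t-t_0|$. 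The cleanest route is: $\tilde d(\tilde x,\cdot)$ restricted to the circle $\partial B_t(\tilde p)$ attains its minimum, this minimum is a critical point of the restricted function, the only critical points on the circle are the two intersection points with meridians through $\tilde x$ (by symmetry, $\psi = 0$ and $\psi = \pi$), and von Mangoldt monotonicity forces $\psi = 0$ to be the strict minimum while $\psi = \pi$ is a maximum (or a non-smooth point past the cut locus). I expect to lean on Remark~\ref{rem5.1_2012_02_07} to locate $\Cut(\tilde x)$ and thereby control where $\tilde d(\tilde x,\cdot)$ fails to be smooth.

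The step I expect to be the main obstacle is establishing the strict inequality uniformly, i.e.\ ruling out that some $\tilde z$ off the meridian achieves distance equal to $|t-t_0|$; this is where the von Mangoldt condition is essential (on a general surface of revolution the distance from $\tilde x$ to the circle $\partial B_t(\tilde p)$ need not be minimized at the obvious point), and the argument should use that $f$ is strictly log-concave on the relevant range $(\rho,\infty)$ — or, in the notation of this section, that $G$ is non-increasing — to show the second variation of the restricted distance function at $\psi = 0$ is positive and that no other minimum can tie it. Concretely I would parametrize a minimal geodesic from $\tilde x$ to a point at radius $t$ by its Clairaut constant $\nu$, express the swept angle $\Delta\theta(\nu)$ and the length as integrals involving $f$, and show that along the family the length is a strictly increasing function of $|\Delta\theta|$ near $\Delta\theta = 0$, using $f' < 0$ beyond $\rho$ and the concavity of $G$; this monotonicity, combined with the cut-locus description, yields that $\psi = 0$ gives the unique minimum and completes the proof.
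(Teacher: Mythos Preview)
Your plan is built on a misconception: the von Mangoldt hypothesis plays no role in this lemma, and the ``main obstacle'' you anticipate does not exist. The paper's proof is a two-line application of the triangle inequality. Writing $t_0:=\tilde d(\tilde p,\tilde x)$, the points $\tilde x$ and $\tilde y$ lie on the same meridian, so $\tilde d(\tilde x,\tilde y)=|t-t_0|$. For any $\tilde z\in\partial B_t(\tilde p)$ the reverse triangle inequality gives
\[
\tilde d(\tilde x,\tilde z)\ \ge\ \big|\tilde d(\tilde p,\tilde z)-\tilde d(\tilde p,\tilde x)\big|\ =\ |t-t_0|,
\]
with equality forcing $\tilde p$, $\tilde x$, $\tilde z$ to lie on a common minimal geodesic; since the only minimal geodesics from $\tilde p$ are meridians, this puts $\tilde z$ on $\tilde\gamma$ and hence $\tilde z=\tilde\gamma(t)=\tilde y$. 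That is the entire argument, and it is valid on \emph{any} rotationally symmetric plane (indeed in any metric space in which rays from the base point are unique minimizers).

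Consequently your assertion that ``on a general surface of revolution the distance from $\tilde x$ to the circle $\partial B_t(\tilde p)$ need not be minimized at the obvious point'' is false, and the machinery you propose---Clairaut integrals, second variation, log-concavity of $f$, the cut-locus description of Remark~\ref{rem5.1_2012_02_07}---is aimed at a difficulty that is not there. Your outline might be pushed through, but it would be a long detour to reach a conclusion that follows immediately from $|\tilde d(\tilde p,\tilde z)-\tilde d(\tilde p,\tilde x)|\le\tilde d(\tilde x,\tilde z)$. Save the structural facts about von Mangoldt surfaces for the next lemmas, where they are genuinely needed.
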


\begin{proof}
Fix arbitrary $\tilde{z} \in \partial B_t (\tilde{p})$.
If $\tilde{d}(\tilde{p},\tilde{x}) \le t$, then we have
\[ \tilde{d}(\tilde{x},\tilde{y}) =t-\tilde{d}(\tilde{p},\tilde{x})
 =\tilde{d}(\tilde{p},\tilde{z}) -\tilde{d}(\tilde{p},\tilde{x})
 \le \tilde{d}(\tilde{x},\tilde{z}) \]
by the triangle inequality, and equality holds only if $\tilde{z}=\tilde{y}$.
In the case where $\tilde{d}(\tilde{p},\tilde{x})>t$, we similarly find
\[ \tilde{d}(\tilde{y},\tilde{x}) =\tilde{d}(\tilde{p},\tilde{x})-t
 =\tilde{d}(\tilde{p},\tilde{x}) -\tilde{d}(\tilde{p},\tilde{z})
 \le \tilde{d}(\tilde{z},\tilde{x}) \]
and equality holds only if $\tilde{z}=\tilde{y}$.
$\qedd$
\end{proof}

\begin{lemma}\label{2011_06_24_lem4.2}
Let $\tilde{x} \in \wt{M} \setminus \{\tilde{p}\}$, $t>0$
and take $\tilde{y}, \tilde{z} \in \partial B_t (\tilde{p})$. 
If $0= \theta(\tilde{x}) < \theta(\tilde{y}) < \theta (\tilde{z}) \le \pi$,
then $\tilde{d} (\tilde{x}, \tilde{y}) <\tilde{d} (\tilde{x}, \tilde{z})$ holds.
\end{lemma}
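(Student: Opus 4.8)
The plan is to fix $\tilde{x}$, rotate $\wt{M}$ so that $\theta(\tilde{x})=0$, and for $\theta\in[0,\pi]$ let $\tilde{c}(\theta)$ denote the point of $\partial B_t(\tilde{p})$ with angular coordinate $\theta$ (these exhaust $\partial B_t(\tilde{p})$, since the geodesic polar coordinates are valid on all of $(0,\infty)\times\Sph_{\tilde{p}}^1$, so that each meridian is a minimal ray). Put $F(\theta):=\tilde{d}(\tilde{x},\tilde{c}(\theta))$; the assertion is exactly that $F(\theta_1)<F(\theta_2)$ whenever $0<\theta_1<\theta_2\le\pi$. First I would reduce this to showing $F'>0$ on $(0,\pi)$: granting that, $F$ is strictly increasing on $(0,\pi)$, and for $\theta_1<\pi=\theta_2$ one still gets $F(\theta_1)<F(\tfrac{\theta_1+\pi}{2})\le F(\pi)$ by combining that monotonicity with the continuity of $\tilde{d}$ at $\tilde{c}(\pi)$.

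To compute $F'$, fix $\theta\in(0,\pi)$. By Remark~\ref{rem5.1_2012_02_07}, $\Cut(\tilde{x})$ is contained in the opposite meridian $\theta^{-1}(\pi)$, so $\tilde{c}(\theta)\notin\Cut(\tilde{x})\cup\{\tilde{x}\}$; hence the minimal geodesic $\tilde{\sigma}_\theta$ from $\tilde{x}$ to $\tilde{c}(\theta)$ is unique and $F$ is smooth near $\theta$. Since $\theta\mapsto\tilde{c}(\theta)$ has velocity $\partial/\partial\theta$ in polar coordinates, the first variation formula (with the initial point $\tilde{x}$ held fixed) gives $F'(\theta)=\langle\dot{\tilde{\sigma}}_\theta(F(\theta)),\,\partial/\partial\theta\rangle$ at $\tilde{c}(\theta)$; writing $\dot{\tilde{\sigma}}_\theta$ in the polar frame and using $d\tilde{s}^2=dt^2+f(t)^2d\theta^2$, this equals $f(t)^2\dot{\theta}$ at the endpoint, i.e.\ $F'(\theta)$ is exactly the Clairaut constant $\mu_\theta$ (the conserved quantity $f(t)^2\dot{\theta}$) of $\tilde{\sigma}_\theta$.

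It then remains to prove $\mu_\theta>0$, which is the heart of the matter. First, $\tilde{\sigma}_\theta$ does not pass through $\tilde{p}$: otherwise it would be a concatenation of two meridian segments lying in $\theta^{-1}(0)\cup\theta^{-1}(\pi)$, which cannot reach $\tilde{c}(\theta)$ for $\theta\in(0,\pi)$. So $\mu_\theta$ is genuinely constant along $\tilde{\sigma}_\theta$, and it is nonzero, since $\mu_\theta=0$ would force $\tilde{\sigma}_\theta$ to be a meridian, contradicting that its angular coordinate runs from $0$ to $\theta$. Consequently the continuous lift $\Theta_\theta$ of the angular coordinate along $\tilde{\sigma}_\theta$, normalized by $\Theta_\theta(0)=0$, is strictly monotone with $\Theta_\theta(F(\theta))-\theta\in 2\pi\mathbb{Z}$. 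I would then show that in fact $\Theta_\theta(F(\theta))=\theta$, so that $\Theta_\theta$ increases from $0$ to $\theta$ and therefore $\dot{\theta}>0$ at the endpoint, giving $\mu_\theta>0$. For this \emph{no winding} statement I would argue by connectedness: the map $\theta\mapsto\Theta_\theta(F(\theta))-\theta$ is continuous on $(0,\pi)$ (minimal geodesics depend continuously on their endpoints off $\Cut(\tilde{x})$, where $\exp_{\tilde{x}}$ is a local diffeomorphism) and takes values in the discrete set $2\pi\mathbb{Z}$, so its zero set $A$ is open and closed in $(0,\pi)$; and $A\ne\emptyset$, because as $\theta\downarrow 0$ the geodesic $\tilde{\sigma}_\theta$ converges in $C^1$ to the unique minimal geodesic from $\tilde{x}$ to $\tilde{c}(0)$, which is a sub-arc of the meridian $\theta^{-1}(0)$ (both points lie on it) and hence has angular variation $0$. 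Thus $A=(0,\pi)$, so $F'>0$ there, and the reduction in the first paragraph completes the proof.

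The step I expect to be the main obstacle is the last one --- ruling out that the minimal geodesic from $\tilde{x}$ to $\tilde{c}(\theta)$ winds around $\wt{M}$. This is precisely where the von Mangoldt hypothesis enters, via the description of $\Cut(\tilde{x})$ in Remark~\ref{rem5.1_2012_02_07} (and, for the base point of the connectedness argument, via the fact that meridians are minimal); the rest is a soft combination of the first variation formula, conservation of the Clairaut integral on a surface of revolution, and the continuous dependence of minimal geodesics on their endpoints away from the cut locus.
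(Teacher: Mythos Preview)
Your argument is correct. The reduction to $F'(\theta)=\mu_\theta>0$ via the first variation formula and the Clairaut integral is sound, and your connectedness argument for ``no winding'' works once one knows that $\Cut(\tilde{x})\subset\theta^{-1}(\pi)$ (Remark~\ref{rem5.1_2012_02_07}); the limiting behavior as $\theta\downarrow 0$ is also fine, even in the degenerate case $\tilde{c}(0)=\tilde{x}$, since the short geodesics stay uniformly away from $\tilde{p}$ and hence have angular variation tending to $0$.

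The paper, however, proceeds quite differently and more synthetically. It reduces at once to $\theta(\tilde z)<\pi$, observes that the minimal segment $\wt{xz}$ must meet the meridian through $\tilde y$ at some point $\tilde w$, and then applies Lemma~\ref{2011_06_24_lem4.1} at $\tilde w$ together with the triangle inequality:
\[
\tilde d(\tilde x,\tilde y)\le \tilde d(\tilde x,\tilde w)+\tilde d(\tilde w,\tilde y)
<\tilde d(\tilde x,\tilde w)+\tilde d(\tilde w,\tilde z)=\tilde d(\tilde x,\tilde z).
\]
No differentiation, no Clairaut relation, and no explicit cut-locus input is used; the only geometric fact needed is the meridian-crossing, which follows from a standard reflection argument across the geodesic line $\theta^{-1}(0)\cup\{\tilde p\}\cup\theta^{-1}(\pi)$ and hence holds on any surface of revolution diffeomorphic to $\R^2$, not just von Mangoldt ones. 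Your approach, by contrast, leans on Remark~\ref{rem5.1_2012_02_07} to guarantee smoothness of $F$ and uniqueness of $\tilde\sigma_\theta$, so it is tied to the von Mangoldt hypothesis; on the other hand it yields the stronger statement $F'>0$ and makes fully explicit the ``no winding'' step that the paper leaves implicit.
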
 

\begin{proof}
We can assume $\theta(\tilde{z})<\pi$ without loss of generality.
Let $\tilde{\gamma}:[0, \infty) \lra \wt{M}$ be the meridian passing through 
$\tilde{y}$, and $\wt{xz}$ be the geodesic segment from $\tilde{x}$ to $\tilde{z}$. 
Then $\wt{xz} \cap \tilde{\gamma}([0, \,\infty)) \neq \emptyset$
since $0<\theta(\tilde{y})<\theta(\tilde{z})<\pi$, so that we take
$\tilde{w} \in \wt{xz} \cap \tilde{\gamma}([0, \,\infty))$.
It follows from Lemma~\ref{2011_06_24_lem4.1} that
$\tilde{d} (\tilde{w}, \tilde{y}) <\tilde{d} (\tilde{w}, \tilde{z})$, and hence
\[ \tilde{d} (\tilde{x}, \tilde{y}) \le \tilde{d} (\tilde{x}, \tilde{w}) + \tilde{d} (\tilde{w}, \tilde{y})
< \tilde{d} (\tilde{x}, \tilde{w}) + \tilde{d} (\tilde{w}, \tilde{z}) = \tilde{d} (\tilde{x}, \tilde{z}). \]
$\qedd$
\end{proof}

\begin{lemma}\label{2011_06_24_lem4.3}
Let 
$\triangle(\tilde{p}\tilde{x}\tilde{y})$ and $\triangle(\tilde{p}\tilde{y}\tilde{z})$ 
be geodesic triangles in $\wt{M}$ such that
$0= \theta(\tilde{x}) < \theta(\tilde{y}) < \theta (\tilde{z})$ 
and $\angle (\tilde{p}\tilde{y}\tilde{x}) + \angle (\tilde{p}\tilde{y}\tilde{z}) \neq \pi$. 
If there is a geodesic triangle 
$\triangle(\tilde{p}\tilde{q}\tilde{r})$ in $\wt{M}$ satisfying 
$\tilde{d} (\tilde{p}, \tilde{q}) = \tilde{d} (\tilde{p}, \tilde{x})$, 
$\tilde{d} (\tilde{p}, \tilde{r}) = \tilde{d} (\tilde{p}, \tilde{z})$, and 
$\tilde{d} (\tilde{q}, \tilde{r}) = \tilde{d} (\tilde{x}, \tilde{y}) + \tilde{d} (\tilde{y}, \tilde{z})$, 
then $\theta (\tilde{z}) < \pi$ holds.
\end{lemma}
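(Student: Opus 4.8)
The plan is to argue by contradiction. Since in this section the angular coordinates are understood to lie in $[0,\pi]$, it suffices to rule out $\theta(\tilde z)=\pi$; so suppose $\theta(\tilde z)=\pi$. Write $t_1:=\tilde d(\tilde p,\tilde x)$, $t_3:=\tilde d(\tilde p,\tilde z)$, $a:=\tilde d(\tilde x,\tilde y)$ and $b:=\tilde d(\tilde y,\tilde z)$, so that the hypotheses on $\triangle(\tilde p\tilde q\tilde r)$ become $\tilde d(\tilde p,\tilde q)=t_1$, $\tilde d(\tilde p,\tilde r)=t_3$ and $\tilde d(\tilde q,\tilde r)=a+b$.

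First I would observe that $\tilde d(\tilde x,\tilde z)<a+b$. If instead $\tilde d(\tilde x,\tilde z)=a+b$, then the broken curve obtained by concatenating the minimal segments $\tilde x\tilde y$ and $\tilde y\tilde z$ would be a minimal geodesic from $\tilde x$ to $\tilde z$, hence smooth at $\tilde y$, so its two tangent directions at $\tilde y$ (toward $\tilde x$ and toward $\tilde z$) would be opposite; measuring each of these against the direction of the meridian from $\tilde y$ toward $\tilde p$ then forces $\angle(\tilde p\tilde y\tilde x)+\angle(\tilde p\tilde y\tilde z)=\pi$. (Here one uses that, as $0=\theta(\tilde x)<\theta(\tilde y)<\theta(\tilde z)\le\pi$, both angular differences are $<\pi$, so by Remark~\ref{rem5.1_2012_02_07} the minimal geodesics $\tilde x\tilde y$ and $\tilde y\tilde z$ are unique; hence the angles $\angle(\tilde p\tilde y\tilde x)$ and $\angle(\tilde p\tilde y\tilde z)$ are unambiguous and are realized by the two halves of the concatenation.) This contradicts $\angle(\tilde p\tilde y\tilde x)+\angle(\tilde p\tilde y\tilde z)\ne\pi$, so $\tilde d(\tilde x,\tilde z)<a+b$.

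The core of the proof is the opposite-looking estimate $\tilde d(\tilde q,\tilde r)\le\tilde d(\tilde x,\tilde z)$: among all pairs of points lying at radial distances $t_1$ and $t_3$ from $\tilde p$, the mutual distance is largest when their angular coordinates differ by $\pi$, which is exactly the configuration of $\tilde x$ and $\tilde z$ (as $\theta(\tilde x)=0$ and $\theta(\tilde z)=\pi$). Using that $\wt M$ is a surface of revolution, rotate so that $\theta(\tilde q)=0$ and, applying the reflection $\theta\mapsto-\theta$, arrange $\alpha:=\theta(\tilde r)\in[0,\pi]$. If $\alpha=\pi$ then $\tilde d(\tilde q,\tilde r)=\tilde d(\tilde x,\tilde z)$ by rotational symmetry; if $\alpha=0$ then $\tilde d(\tilde q,\tilde r)=|t_1-t_3|\le\tilde d(\tilde x,\tilde z)$ by the reverse triangle inequality; and if $0<\alpha<\pi$, then Lemma~\ref{2011_06_24_lem4.2}, applied with $\tilde q$, $t_3$, $\tilde r$, and the point at radial distance $t_3$ and angle $\pi$ in the roles of ``$\tilde x$'', ``$t$'', ``$\tilde y$'', ``$\tilde z$'' there, yields $\tilde d(\tilde q,\tilde r)<\tilde d(\tilde x,\tilde z)$. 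In every case $\tilde d(\tilde q,\tilde r)\le\tilde d(\tilde x,\tilde z)$.

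Combining the two steps gives $a+b=\tilde d(\tilde q,\tilde r)\le\tilde d(\tilde x,\tilde z)<a+b$, which is absurd; hence $\theta(\tilde z)<\pi$. I expect the only delicate point — though it is routine — to be the first step, namely checking that a minimal geodesic through $\tilde y$ meets the meridian through $\tilde y$ at supplementary angles on its two sides and that these angles coincide with the $\angle(\tilde p\tilde y\tilde x)$, $\angle(\tilde p\tilde y\tilde z)$ of the statement; this relies on the uniqueness of the relevant minimal geodesics furnished by Remark~\ref{rem5.1_2012_02_07}. The monotonicity of distance in the angular variable that drives the second step is already packaged in Lemma~\ref{2011_06_24_lem4.2}, so nothing new is needed there.
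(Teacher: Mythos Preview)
Your argument for the case $\theta(\tilde z)=\pi$ is correct and essentially matches the paper's treatment of that case; your Step~2 (distance between points on $\partial B_{t_1}(\tilde p)$ and $\partial B_{t_3}(\tilde p)$ is maximized at angular separation $\pi$) is exactly what the paper uses, just phrased differently.

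However, there is a genuine gap at the very start: your claim that ``in this section the angular coordinates are understood to lie in $[0,\pi]$'' is not correct. The polar coordinate $\theta$ ranges over the full circle $\Sph^1_{\tilde p}$; the constraint $\theta\le\pi$ appears only as a hypothesis in Lemma~\ref{2011_06_24_lem4.2}, not as a standing convention, and the statement of Lemma~\ref{2011_06_24_lem4.3} places no upper bound on $\theta(\tilde z)$. Indeed, the paper's own proof begins with ``Suppose $\theta(\tilde z)>\pi$'' and handles that case first. So you have only proved half of what is required.

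The missing case is easy to repair along the lines you already set up. Assume $\theta(\tilde z)>\pi$ and let $\tilde v\in\partial B_{t_3}(\tilde p)$ be the point with $\theta(\tilde v)=\pi$. Applying Lemma~\ref{2011_06_24_lem4.2} with base point $\tilde y$ (after rotating so $\theta(\tilde y)=0$; note the angular gaps from $\tilde y$ to $\tilde v$ and to $\tilde z$ satisfy $0<\pi-\theta(\tilde y)<\theta(\tilde z)-\theta(\tilde y)$, and one reflects if needed to bring the larger one into $[0,\pi]$) gives $\tilde d(\tilde y,\tilde v)<\tilde d(\tilde y,\tilde z)=b$, hence
\[
\tilde d(\tilde x,\tilde v)\le a+\tilde d(\tilde y,\tilde v)<a+b.
\]
Your Step~2, unchanged, yields $a+b=\tilde d(\tilde q,\tilde r)\le\tilde d(\tilde x,\tilde v)$, and the contradiction follows. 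This is precisely the paper's argument for $\theta(\tilde z)>\pi$; once you add it, your proof coincides with the paper's.
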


\begin{proof}
Suppose $\theta (\tilde{z})>\pi$.
Put $t:=\tilde{d} (\tilde{p}, \tilde{z})$ and let $\tilde{v} \in \partial B_t (\tilde{p})$
be the point with $\theta (\tilde{v}) = \pi$. 
Then Lemma~\ref{2011_06_24_lem4.2} for $\tilde{y}$ and
$\tilde{v},\tilde{z} \in \partial B_t(\tilde{p})$ shows
$\tilde{d} (\tilde{y}, \tilde{v}) <\tilde{d} (\tilde{y}, \tilde{z})$.
Together with the triangle inequality, we find
\begin{equation}\label{2011_06_24_lem4.3_2}
\tilde{d} (\tilde{x}, \tilde{v}) 
\le \tilde{d} (\tilde{x}, \tilde{y}) + \tilde{d} (\tilde{y}, \tilde{v})
< \tilde{d} (\tilde{x}, \tilde{y}) + \tilde{d} (\tilde{y}, \tilde{z}).
\end{equation}
Since $\tilde{d} (\tilde{p}, \tilde{q}) = \tilde{d} (\tilde{p}, \tilde{x})$ and
$\tilde{d} (\tilde{p}, \tilde{r}) = \tilde{d} (\tilde{p}, \tilde{z})=t$,
we can take $\tilde{w} \in \partial B_t (\tilde{p})$ satisfying 
$\tilde{d} (\tilde{x}, \tilde{w}) = \tilde{d} (\tilde{q}, \tilde{r})$.
Combining this with \eqref{2011_06_24_lem4.3_2} yields
\begin{equation}\label{2011_06_24_lem4.3_4}
\tilde{d} (\tilde{x}, \tilde{v}) 
< \tilde{d} (\tilde{x}, \tilde{y}) + \tilde{d} (\tilde{y}, \tilde{z}) 
= \tilde{d} (\tilde{q}, \tilde{r}) =\tilde{d} (\tilde{x}, \tilde{w}).
\end{equation}
Note that, however, $\triangle(\tilde{p}\tilde{x}\tilde{w})$ is isometric to 
$\triangle(\tilde{p}\tilde{q}\tilde{r})$.
Since $\pi = \theta (\tilde{v}) \ge \angle( \tilde{q}\tilde{p}\tilde{r}) 
= \angle( \tilde{x}\tilde{p}\tilde{w})$, it follows from Lemma~\ref{2011_06_24_lem4.2} that 
$\tilde{d} (\tilde{x}, \tilde{w}) \le \tilde{d} (\tilde{x}, \tilde{v})$.
This contradicts \eqref{2011_06_24_lem4.3_4}.

If $\theta(\tilde{z})=\pi$, then
$\tilde{d}(\tilde{x},\tilde{z})<\tilde{d} (\tilde{x}, \tilde{y}) + \tilde{d} (\tilde{y}, \tilde{z})
 =\tilde{d} (\tilde{x}, \tilde{w})$ similarly implies a contradiction,
where $\tilde{d}(\tilde{x},\tilde{z})<\tilde{d} (\tilde{x}, \tilde{y}) + \tilde{d} (\tilde{y}, \tilde{z})$
follows from the hypothesis
$\angle (\tilde{p}\tilde{y}\tilde{x}) + \angle (\tilde{p}\tilde{y}\tilde{z}) \neq \pi$.
$\qedd$
\end{proof}

The following lemma is the main result of this section.

\begin{lemma}{\bf (Double triangle lemma)}\label{2011_06_28_lem4.4}
Let 
$\triangle(\tilde{p}\tilde{x}\tilde{y})$, $\triangle(\tilde{p}\tilde{y}\tilde{z})$ 
be geodesic triangles in $\wt{M}$ such that
$0= \theta(\tilde{x}) < \theta(\tilde{y}) < \theta (\tilde{z})$ and 
$\angle (\tilde{p}\tilde{y}\tilde{x}) + \angle (\tilde{p}\tilde{y}\tilde{z}) \le \pi$. 
If there is a geodesic triangle 
$\triangle(\tilde{p}\tilde{q}\tilde{r})$ in $\wt{M}$ satisfying 
$\tilde{d} (\tilde{p}, \tilde{q}) = \tilde{d} (\tilde{p}, \tilde{x})$, 
$\tilde{d} (\tilde{p}, \tilde{r}) = \tilde{d} (\tilde{p}, \tilde{z})$, and 
$\tilde{d} (\tilde{q}, \tilde{r}) = \tilde{d} (\tilde{x}, \tilde{y}) + \tilde{d} (\tilde{y}, \tilde{z})$,
then we have $\angle \tilde{x} \ge \angle \tilde{q}$ and $\angle \tilde{z} \ge \angle \tilde{r}$.
\end{lemma}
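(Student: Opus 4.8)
The statement is the model-surface analogue of Alexandrov's lemma, so I would follow that classical scheme, carrying it out with the two tools available for a von Mangoldt surface: the monotonicity of $\tilde{d}(\tilde{x},\cdot)$ along the geodesic circles $\partial B_t(\tilde{p})$ (Lemmas~\ref{2011_06_24_lem4.1}--\ref{2011_06_24_lem4.2}), and the simple structure of its cut locus (Remark~\ref{rem5.1_2012_02_07}), which in particular makes a geodesic triangle having $\tilde{p}$ as a vertex unique up to isometry once its three side lengths are prescribed. First I would dispose of the degenerate configurations: if $\angle(\tilde{p}\tilde{y}\tilde{x})+\angle(\tilde{p}\tilde{y}\tilde{z})=\pi$ and the broken geodesic $\tilde{x}\tilde{y}\cup\tilde{y}\tilde{z}$ is minimal, the conclusion is immediate, since $\triangle(\tilde{p}\tilde{x}\tilde{z})$ then has the same three side lengths as $\triangle(\tilde{p}\tilde{q}\tilde{r})$, hence is isometric to it, while the geodesic $\tilde{x}\tilde{z}$ passes through $\tilde{y}$, so that $\angle(\tilde{p}\tilde{x}\tilde{z})=\angle\tilde{x}$ and $\angle(\tilde{p}\tilde{z}\tilde{x})=\angle\tilde{z}$. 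In the remaining cases I would use Lemma~\ref{2011_06_24_lem4.3} (and, in the borderline case, the argument behind it) to secure $\theta(\tilde{z})<\pi$, so that $\triangle(\tilde{p}\tilde{x}\tilde{z})$ is a genuine geodesic triangle and $\tilde{d}(\tilde{x},\tilde{z})<\tilde{d}(\tilde{x},\tilde{y})+\tilde{d}(\tilde{y},\tilde{z})=\tilde{d}(\tilde{q},\tilde{r})$.

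Next I would normalize $\triangle(\tilde{p}\tilde{q}\tilde{r})$ by a rotation about $\tilde{p}$ so that $\tilde{q}=\tilde{x}$, orienting it so that $\tilde{r}$ lies on the same side of the meridian through $\tilde{x}$ as $\tilde{y}$ and $\tilde{z}$; thus $0=\theta(\tilde{x})\le\theta(\tilde{y})\le\theta(\tilde{z})$ and $\theta(\tilde{r})\in[0,\pi]$. Comparing the base $\tilde{d}(\tilde{x},\tilde{z})$ of $\triangle(\tilde{p}\tilde{x}\tilde{z})$ with the base $\tilde{d}(\tilde{q},\tilde{r})$ of $\triangle(\tilde{p}\tilde{x}\tilde{r})$ via Lemma~\ref{2011_06_24_lem4.2} applied to $\tilde{z},\tilde{r}\in\partial B_{\tilde{d}(\tilde{p},\tilde{z})}(\tilde{p})$ and the vertex $\tilde{x}$, together with $\tilde{d}(\tilde{x},\tilde{z})<\tilde{d}(\tilde{q},\tilde{r})$, I obtain $\theta(\tilde{z})<\theta(\tilde{r})$ (this is the model-surface form of $\angle\tilde{p}$ in $\triangle(\tilde{p}\tilde{q}\tilde{r})$ being at least the sum of the two angles at $\tilde{p}$). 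I would also record that, since $\angle(\tilde{p}\tilde{y}\tilde{x})+\angle(\tilde{p}\tilde{y}\tilde{z})\le\pi$ and the meridian through $\tilde{y}$ separates $\tilde{x}$ from $\tilde{z}$, the interior angle of the broken geodesic at $\tilde{y}$ equals $\angle(\tilde{p}\tilde{y}\tilde{x})+\angle(\tilde{p}\tilde{y}\tilde{z})\le\pi$, which places $\tilde{y}$ on the side of the geodesic $\tilde{x}\tilde{z}$ opposite to $\tilde{p}$; hence the direction at $\tilde{x}$ toward $\tilde{z}$ lies between those toward $\tilde{p}$ and toward $\tilde{y}$, and likewise at $\tilde{z}$, so that
\[
\angle(\tilde{p}\tilde{x}\tilde{z})\le\angle(\tilde{p}\tilde{x}\tilde{y})=\angle\tilde{x},
\qquad
\angle(\tilde{p}\tilde{z}\tilde{x})\le\angle(\tilde{p}\tilde{z}\tilde{y})=\angle\tilde{z}.
\]

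It then remains to pass from $\triangle(\tilde{p}\tilde{x}\tilde{z})$ to $\triangle(\tilde{p}\tilde{x}\tilde{r})=\triangle(\tilde{p}\tilde{q}\tilde{r})$, i.e.\ to show $\angle(\tilde{p}\tilde{x}\tilde{z})\ge\angle\tilde{q}$ and $\angle(\tilde{p}\tilde{z}\tilde{x})\ge\angle\tilde{r}$; note that the displayed inequalities then finish the proof. For the first of these I would mark the point $\tilde{m}$ on the minimal geodesic $\tilde{x}\tilde{r}$ with $\tilde{d}(\tilde{x},\tilde{m})=\tilde{d}(\tilde{x},\tilde{y})$, so that $\tilde{d}(\tilde{m},\tilde{r})=\tilde{d}(\tilde{y},\tilde{z})$, and compare the radial function along $\tilde{x}\tilde{r}$ with the one along the broken geodesic $\tilde{x}\tilde{y}\cup\tilde{y}\tilde{z}$, both parametrized by arclength from $\tilde{x}$ and both terminating at distance $\tilde{d}(\tilde{p},\tilde{z})$ from $\tilde{p}$: using that the broken geodesic has a concave corner at $\tilde{y}$ (the turning angle there is $\le\pi$ and is directed away from $\tilde{p}$) together with a Clairaut/Riccati comparison for the smooth pieces on the von Mangoldt surface, I would conclude $\tilde{d}(\tilde{p},\tilde{m})\le\tilde{d}(\tilde{p},\tilde{y})$. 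Feeding this into the hinge at $\tilde{x}$ with legs $\tilde{x}\tilde{p}$ and $\tilde{x}\tilde{m}=\tilde{x}\tilde{y}$, the monotonicity of the opposite side in the hinge angle yields $\angle\tilde{q}=\angle(\tilde{p}\tilde{x}\tilde{r})=\angle(\tilde{p}\tilde{x}\tilde{m})\le\angle(\tilde{p}\tilde{x}\tilde{y})=\angle\tilde{x}$, and the symmetric argument started from $\tilde{r}$ gives $\angle\tilde{z}\ge\angle\tilde{r}$.

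The hard part is exactly this last step: the ``hinge'' and ``SAS'' monotonicities are being invoked at vertices other than the pole $\tilde{p}$, where they may fail on a general surface of revolution, so the argument must genuinely use that $(\wt{M},\tilde{p})$ is of von Mangoldt type — through the non-increasing curvature (entering the Riccati comparison and the behaviour of $f$ and $f'$ for large radius) and through the one-ray structure of the cut loci in Remark~\ref{rem5.1_2012_02_07} — in order to establish the radial comparison $\tilde{d}(\tilde{p},\tilde{m})\le\tilde{d}(\tilde{p},\tilde{y})$ without circularly assuming the conclusion; this is precisely what breaks the coupling between the two desired inequalities. One also has to treat with some care the boundary situations in which $\theta(\tilde{z})$ or $\theta(\tilde{r})$ equals $\pi$, or in which $\tilde{m}$ or $\tilde{z}$ lies on a cut locus, which are handled by the monotonicity lemmas of this section together with a limiting argument.
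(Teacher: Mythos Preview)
Your outline leaves the decisive step as an admitted gap: you need $\tilde d(\tilde p,\tilde m)\le\tilde d(\tilde p,\tilde y)$ (or, equivalently, hinge monotonicity at the non-pole vertex $\tilde x$), and you appeal to an unspecified ``Clairaut/Riccati comparison'' while conceding that such SAS/hinge statements away from $\tilde p$ need a genuine argument on a von Mangoldt surface. As written, nothing in Section~\ref{sec5} supplies this; Lemmas~\ref{2011_06_24_lem4.1}--\ref{2011_06_24_lem4.2} only give monotonicity of $\tilde d(\tilde x,\cdot)$ along circles \emph{centred at $\tilde p$}, not along geodesic circles of $\tilde x$, so neither the radial comparison for $\tilde m$ nor the subsequent angle conclusion is justified. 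The detour through $\triangle(\tilde p\tilde x\tilde z)$ is also shaky: the claim that $\tilde y$ lies on the far side of the chord $\tilde x\tilde z$ from $\tilde p$ (and hence that $\angle(\tilde p\tilde x\tilde z)\le\angle\tilde x$) is a local statement about the bend at $\tilde y$, and on a surface with variable curvature it does not by itself locate $\tilde y$ relative to the global geodesic $\tilde x\tilde z$.

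By contrast, the paper's proof avoids all of this. After handling the equality case and invoking Lemma~\ref{2011_06_24_lem4.3} to get $\theta(\tilde z)<\pi$, it simply slides a point $\eta(s)$ along the circle $\partial B_t(\tilde p)$ (with $t=\tilde d(\tilde p,\tilde z)$) from $\tilde z$ outward until $\tilde d(\tilde x,\eta(s_0))=\tilde d(\tilde q,\tilde r)$; Lemma~\ref{2011_06_24_lem4.2} applied \emph{twice}---once at $\tilde x$ and once at $\tilde y$---gives $\tilde d(\tilde x,\eta(s))\le\tilde d(\tilde x,\eta(s_0))<\tilde d(\tilde x,\tilde y)+\tilde d(\tilde y,\eta(s))$ for every $s\in(0,s_0]$, so the geodesic from $\tilde x$ through $\tilde y$ cannot meet this arc, forcing $\angle(\tilde p\tilde x\tilde y)>\angle(\tilde p\tilde x\eta(s_0))=\angle\tilde q$. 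No hinge comparison at $\tilde x$, no intermediate triangle $\triangle(\tilde p\tilde x\tilde z)$, and no Riccati argument is needed; everything reduces to the circle-monotonicity lemmas already proved. I would recommend replacing your last two paragraphs by this much shorter route.
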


\begin{proof}
If $\angle (\tilde{p}\tilde{y}\tilde{x}) + \angle (\tilde{p}\tilde{y}\tilde{z}) = \pi$,
then $\triangle(\tilde{p}\tilde{x}\tilde{z})$ is isometric to $\triangle(\tilde{p}\tilde{q}\tilde{r})$,
and $\angle \tilde{x}=\angle \tilde{q}$ as well as $\angle \tilde{z}=\angle \tilde{r}$ hold.
Assume $\angle (\tilde{p}\tilde{y}\tilde{x}) + \angle (\tilde{p}\tilde{y}\tilde{z}) < \pi$
and put $t:=\tilde{d}(\tilde{p},\tilde{z})$.
Note that
\[ \tilde{d}(\tilde{x},\tilde{z})<\tilde{d} (\tilde{x}, \tilde{y}) + \tilde{d} (\tilde{y}, \tilde{z})
 =\tilde{d}(\tilde{q},\tilde{r}). \]
Since $\theta(\tilde{z})<\pi$ by Lemma~\ref{2011_06_24_lem4.3},
let us consider the continuous curve $\eta:[0,\pi-\theta(\tilde{z})] \lra \partial B_t(\tilde{p})$
with $\eta(0)=\tilde{z}$ and $\theta(\eta(s))=\theta(\tilde{z})+s$.
Take $\tilde{v}=\eta(s_0)$ such that $\tilde{d}(\tilde{x},\tilde{v})=\tilde{d}(\tilde{q},\tilde{r})$,
and observe $\theta(\tilde{z})<\theta(\tilde{v}) \le \pi$ from $\tilde{d}(\tilde{x},\tilde{z}) <\tilde{d}(\tilde{x},\tilde{v})$.
For any $s \in (0,s_0]$, Lemma~\ref{2011_06_24_lem4.2} shows
\[ \tilde{d}\big( \tilde{x},\eta(s) \big) \le \tilde{d}(\tilde{x},\tilde{v})
 =\tilde{d}(\tilde{x},\tilde{y}) +\tilde{d}(\tilde{y},\tilde{z})
 <\tilde{d}(\tilde{x},\tilde{y}) +\tilde{d}\big( \tilde{y},\eta(s) \big), \]
so that the minimal geodesic emanating from $\tilde{x}$ and passing through $\tilde{y}$
does not cross $\eta([0,s_0])$.
Therefore $\angle (\tilde{p}\tilde{x}\tilde{y}) > \angle (\tilde{p}\tilde{x}\tilde{v})$ must hold.
Thus we have
\[ \angle \tilde{x}=\angle (\tilde{p}\tilde{x}\tilde{y}) > \angle (\tilde{p}\tilde{x}\tilde{v})
 =\angle (\tilde{p}\tilde{q}\tilde{r}) =\angle \tilde{q}, \]
and similarly $\angle \tilde{z}>\angle \tilde{r}$.
$\qedd$
\end{proof}

\section{Proof of the main theorem}\label{sec6}

This section is devoted to the proof of our main theorem.
Throughout this section, let $(M,F,p)$, $(\wt{M},\tilde{p})$ and $(\wt{M}_{\delta},\tilde{o})$
be as in Subsection~\ref{sec3.2}.

\begin{definition}[Locality]\label{def6.1_2012_02_10}
A forward triangle $\triangle (\ora{px}, \ora{py}) = (p, x, y; \gamma, \sigma, c)$ in $M$ 
is said to be {\em local} if it admits a comparison triangle in $\wt{M}$ and if
$c((0,d(x,y)])$ contains no cut point of $x$.
\end{definition}

Note that a local triangle admits a comparison triangle also in $\wt{M}_{\delta}$.
We prove a simple lemma for later convenience.

\begin{lemma}\label{lm:loc}
Under the assumptions in Lemma~$\ref{2011_07_08_lem3.10}$,
if the forward triangle $\triangle (\ora{px}, \ora{py}) = (p, x, y; \gamma, \sigma, c) \subset M$ 
admits a comparison triangle in $\wt{M}$, then so does
$\triangle (\ora{px}, \ora{pc(s)})$ for any $s \in (0,a)$, where $a:=d(x,y)$.
\end{lemma}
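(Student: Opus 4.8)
The plan is to produce, for the triple of prescribed side lengths $d(p,x)$, $d(p,c(s))$, $L_{\rm m}(c|_{[0,s]})$, a geodesic triangle in $\widetilde M$ based at $\tilde p$ realizing them; I would obtain its third vertex by ``cutting'' the given comparison triangle of $\triangle(\ora{px},\ora{py})$ together with a continuity argument on a circle about $\tilde p$.

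First I would record the metric facts about $c$ coming from hypothesis~(3) of Lemma~\ref{2011_07_08_lem3.10}: since $c$ and $\bar c$ are geodesics they have constant $F$-speed, so $F(\dot c)\equiv1$ and $F(-\dot c)\equiv\lambda_0$ for a constant $\lambda_0$; and since $\bar c$ is moreover minimal, for $0\le u\le u'\le a$ the reverse of $c|_{[u,u']}$ is a subsegment of the minimal geodesic $\bar c$, hence minimal of $F$-length $\lambda_0(u'-u)$. Putting $\lambda:=\max\{1,\lambda_0\}$, this gives
\[ d_{\rm m}\bigl(c(u),c(u')\bigr)=\lambda(u'-u)=L_{\rm m}\bigl(c|_{[u,u']}\bigr) \]
for all such $u,u'$; in particular $L_{\rm m}(c|_{[0,s]})=\lambda s$, and the (non-symmetric) triangle inequality yields $\bigl|\,d(p,c(u))-d(p,c(u'))\,\bigr|\le\lambda(u'-u)$.

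Next I would normalize a comparison triangle $\triangle(\tilde p\tilde x\tilde y)$ of $\triangle(\ora{px},\ora{py})$ in $\widetilde M$ (it exists by hypothesis), using a rotation and, if needed, the reflection $\theta\mapsto-\theta$, so that $0=\theta(\tilde x)\le\theta(\tilde y)=:\theta_0\le\pi$; recall $\tilde d(\tilde p,\tilde x)=d(p,x)$, $\tilde d(\tilde p,\tilde y)=d(p,y)$ and $\tilde d(\tilde x,\tilde y)=L_{\rm m}(c)=\lambda a$. Fixing $s\in(0,a)$, for $\theta\in[0,\theta_0]$ let $\tilde z_\theta$ denote the point with $\tilde d(\tilde p,\tilde z_\theta)=d(p,c(s))$ and $\theta(\tilde z_\theta)=\theta$. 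As meridians of $\widetilde M$ are minimizing rays, $\tilde x,\tilde z_0$ lie on one meridian and $\tilde y,\tilde z_{\theta_0}$ on another, so $\tilde d(\tilde x,\tilde z_0)=\bigl|d(p,x)-d(p,c(s))\bigr|\le d_{\rm m}(x,c(s))=\lambda s$ while
\[ \tilde d(\tilde x,\tilde z_{\theta_0})\ \ge\ \tilde d(\tilde x,\tilde y)-\tilde d(\tilde y,\tilde z_{\theta_0})\ =\ \lambda a-\bigl|d(p,c(a))-d(p,c(s))\bigr|\ \ge\ \lambda a-\lambda(a-s)\ =\ \lambda s. \]
By Lemmas~\ref{2011_06_24_lem4.1}--\ref{2011_06_24_lem4.2}, $\theta\mapsto\tilde d(\tilde x,\tilde z_\theta)$ is continuous and non-decreasing on $[0,\theta_0]\subset[0,\pi]$, so some $\theta^{\ast}\in[0,\theta_0]$ satisfies $\tilde d(\tilde x,\tilde z_{\theta^\ast})=\lambda s$; then $\tilde z:=\tilde z_{\theta^\ast}$ makes $\triangle(\tilde p\tilde x\tilde z)$ a geodesic triangle with side lengths $d(p,x)$, $d(p,c(s))$, $\lambda s=L_{\rm m}(c|_{[0,s]})$, i.e., a comparison triangle of $\triangle(\ora{px},\ora{pc(s)})$.

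The main point is the two-sided estimate $\tilde d(\tilde x,\tilde z_0)\le\lambda s\le\tilde d(\tilde x,\tilde z_{\theta_0})$ of the previous step, which rests solely on the identities of the first step (the only place the geodesy of $c$ and $\bar c$ is used) and requires no curvature comparison between $M$ and $\widetilde M$ --- only the surface-of-revolution structure of $\widetilde M$, so that Lemmas~\ref{2011_06_24_lem4.1}--\ref{2011_06_24_lem4.2} apply and meridians are minimizing. A minor care is to allow $\theta_0=\pi$ in the normalization (if $\tilde y$ happens to lie on the meridian opposite $\tilde x$), which is harmless, since a comparison triangle is only asked to carry the prescribed side lengths, regardless of cut loci.
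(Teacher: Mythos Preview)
Your argument is correct and takes a genuinely different route from the paper's.

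The paper argues by contradiction: it lets $s_1$ be the supremum of parameters $s$ for which $\triangle(\ora{px},\ora{pc(r)})$ admits a comparison triangle for all $r<s$, observes that at $s_1$ the comparison triangle must have angle $\pi$ at $\tilde p$, and then chains further comparison triangles for a subdivision $s_1<s_2<\cdots<s_N=a$; the resulting broken geodesic in $\wt M$ has $\theta$-extent exceeding $\pi$ while its total length equals $L_{\rm m}(c)$, forcing the full triangle $\triangle(\ora{px},\ora{py})$ to admit no comparison triangle. Your proof instead constructs the comparison triangle of $\triangle(\ora{px},\ora{pc(s)})$ directly: you place the given comparison triangle with $\theta(\tilde x)=0\le\theta(\tilde y)=\theta_0\le\pi$, slide a point $\tilde z_\theta$ along $\partial B_{d(p,c(s))}(\tilde p)$, and sandwich the target length $\lambda s$ between $\tilde d(\tilde x,\tilde z_0)=|d(p,x)-d(p,c(s))|\le\lambda s$ and $\tilde d(\tilde x,\tilde z_{\theta_0})\ge\lambda a-|d(p,y)-d(p,c(s))|\ge\lambda s$, invoking the monotonicity of $\theta\mapsto\tilde d(\tilde x,\tilde z_\theta)$ from Lemmas~\ref{2011_06_24_lem4.1}--\ref{2011_06_24_lem4.2}.

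What each buys: your approach is shorter and entirely elementary---it uses only the triangle inequality in $M$ (via the metric identities for $c$ and $\bar c$), the meridian structure of $\wt M$, and the intermediate value theorem, with no need to subdivide or to invoke thin comparison triangles. The paper's approach, while less direct here, is closer in spirit to the inductive gluing used throughout Section~\ref{sec6}; it also makes explicit \emph{why} failure would occur (the $\theta$-extent blowing past $\pi$), which dovetails with the later use of the double triangle lemma. Two very minor remarks on your write-up: continuity of $\theta\mapsto\tilde d(\tilde x,\tilde z_\theta)$ is simply continuity of $\tilde d$, not a consequence of Lemmas~\ref{2011_06_24_lem4.1}--\ref{2011_06_24_lem4.2}; and those lemmas actually give strict monotonicity, though you only need the intermediate value theorem.
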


\begin{proof}
Let $s_1$ be the maximum of $s \in [0,a]$ such that, for every $r \in (0,s)$,
$\triangle (\ora{px}, \ora{pc(r)})$ admits a comparison triangle.
Note that $s_1>0$ thanks to \eqref{eq:tri}.
If $s_1<a$, then $\triangle (\ora{px}, \ora{pc(s_1)})$ admits a comparison triangle
$\triangle(\tilde{o}\tilde{x}\wt{c(s_1)}) \subset \wt{M}$ with
$\angle(\tilde{x}\tilde{o}\wt{c(s_1)})=\pi$.
Take $s_1<s_2< \cdots <s_N=a$ such that $\triangle (\ora{pc(s_i)}, \ora{pc(s_{i+1})})$
admits a comparison triangle $\triangle(\tilde{o}\wt{c(s_i)}\wt{c(s_{i+1})})$
for each $i=1,2,\ldots,N-1$, where
$\theta(\tilde{x})<\theta(\wt{c(s_1)})<\cdots<\theta(\wt{c(s_N)})$.
Then $\theta(\wt{c(s_N)}) -\theta(\tilde{x})>\pi$ and
$L_{\rm{m}}(c) =\tilde{d}(\tilde{x},\wt{c(s_1)})
 +\sum_{i=1}^{N-1} \tilde{d}(\wt{c(s_i)},\wt{c(s_{i+1})})$
show that $\triangle (\ora{px}, \ora{py})$ does not admit a comparison triangle in $\wt{M}$,
this is a contradiction.
$\qedd$
\end{proof}

\begin{lemma}\label{2011_07_08_lem4.5}
If a local triangle 
$\triangle (\ora{px}, \ora{py}) = (p, x, y; \gamma, \sigma, c)$ in $M$ satisfies the conditions
\begin{enumerate}[{\rm (1)}]
\item
$c([0,a]) \subset M \setminus \ol{B^+_{\rho}(p)}$, where $a:=d(x,y)$,
\item
$g_v(w,w) \ge F(w)^2$ for all $s \in [0,a]$, $v \in \cG_p(c(s))$ and $w \in T_{c(s)}M$,
\item
$\cT_M(v,\dot{c}(s)) = 0$ for all $s \in [0, a]$ and $v \in \cG_p(c(s))$,
and the reverse curve of $c$ is geodesic,
\item
$c([0,a]) \cap \CC(p) = \emptyset$, where $\CC(p):=\Cut(p) \cap \Conj(p)$,
\end{enumerate}
then it admits a comparison triangle $\triangle (\tilde{o}\tilde{x} \tilde{y})$ in $\wt{M}_\delta$
such that $\ora{\angle} x \ge \angle \tilde{x}$ and $\ola{\angle} y \ge \angle \tilde{y}$.
\end{lemma}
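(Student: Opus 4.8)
The plan is to upgrade the local (thin) comparison of Lemma~\ref{2011_07_08_lem3.10} to an arbitrarily long local triangle by subdividing $c$ into short pieces, applying Weak TCT to each piece in $\wt{M}_{\delta}$, and then gluing the resulting thin comparison triangles with the double triangle lemma (Lemma~\ref{2011_06_28_lem4.4}). First I would cover $c([0,a])$ by finitely many overlapping subintervals on each of which the hypotheses of Lemma~\ref{2011_07_08_lem3.10} hold: since $c([0,a])$ is compact, disjoint from $\ol{B^+_{\rho}(p)}$ by (1), and disjoint from $\CC(p)$ by (4), every point $c(s)$ has a neighbourhood of the type required in Subsection~\ref{sec3.1} (using $q=c(s)$), either via the cut-locus case or, near points of $\Cut(p)\setminus\Conj(p)$ and $\Conj(p)\setminus\Cut(p)$, via Remark~\ref{rm:conj}; conditions (2) and (3) are exactly the pointwise hypotheses needed along $c$. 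A Lebesgue-number argument then produces a partition $0=t_0<t_1<\cdots<t_N=a$ such that each $\triangle(\ora{p\,c(t_{i})},\ora{p\,c(t_{i+1})})$ satisfies Weak TCT; here I must also use Lemma~\ref{lm:loc}, which guarantees that each of these sub-triangles, and all the partial triangles obtained by gluing, still admits a comparison triangle in $\wt{M}$ (hence in $\wt{M}_{\delta}$), so the gluing never runs past the meridian opposite to $\tilde{o}$.

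Next I would set up the induction on the gluing. Write $x_i:=c(t_i)$, so $x_0=x$, $x_N=y$, and $\bar c$ geodesic and minimal (by the reduction in Lemma~\ref{2011_07_08_lem3.10}) gives $F(-\dot c)$ constant along $c$, whence $L_{\rm m}(c|_{[t_i,t_j]})=\lambda\,(t_j-t_i)$ is additive in a way compatible with the comparison. Weak TCT gives, for each $i$, a comparison triangle $\triangle(\tilde o\,\tilde x_i\,\tilde x_{i+1})\subset\wt M_{\delta}$ with $\ora{\angle}(p\,x_i\,x_{i+1})\ge\angle(\tilde p\,\tilde x_i\,\tilde x_{i+1})$ and $\ola{\angle}(p\,x_{i+1}\,x_i)\ge\angle(\tilde p\,\tilde x_{i+1}\,\tilde x_i)$. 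Lay these out consecutively around $\tilde o$ with increasing $\theta$, i.e. $0=\theta(\tilde x_0)<\theta(\tilde x_1)<\cdots<\theta(\tilde x_N)$ (possible by Lemma~\ref{lm:loc}, which keeps the total angle below $\pi$ at each stage). The crucial point is the hinge condition at each interior vertex $x_i$: because $c$ is a geodesic (its reverse $\bar c$ is geodesic), the backward and forward angles of $c$ at $x_i$ satisfy $\ola{\angle}(p\,x_i\,x_{i-1})+\ora{\angle}(p\,x_i\,x_{i+1})\le\pi$, since these two angles are the angles made by $\pm$ the same tangent direction $\dot c(t_i)$ with a common $v\in\cG_p(x_i)$, and \eqref{eq:angle} together with $\pi-\ora{\angle}=\ola{\angle}$ of Remark~\ref{rem_2012_01_21_3.1} forces their sum to be at most $\pi$. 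Transporting this through the angle inequalities from Weak TCT yields $\angle(\tilde p\,\tilde x_i\,\tilde x_{i-1})+\angle(\tilde p\,\tilde x_i\,\tilde x_{i+1})\le\pi$, which is precisely the hypothesis of Lemma~\ref{2011_06_28_lem4.4}.

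Finally I would glue from left to right: applying Lemma~\ref{2011_06_28_lem4.4} to the pair $\triangle(\tilde p\,\tilde x_0\,\tilde x_1)$ and $\triangle(\tilde p\,\tilde x_1\,\tilde x_2)$ produces a single comparison triangle $\triangle(\tilde p\,\tilde q\,\tilde r)$ on $x_0,x_2$ with $\angle\tilde q\le\angle(\tilde p\,\tilde x_0\,\tilde x_1)\le\ora{\angle}(p\,x_0\,x_1)$; but one must check that $\ora{\angle}(p\,x_0\,x_1)\le\ora{\angle}x$, which follows because $c$ is geodesic so the forward angle of $c|_{[t_0,a]}$ at $x$ equals $\ora{\angle}x$ exactly (not merely $\ge$). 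One also carries along the backward angle at the far vertex via the second conclusion of Lemma~\ref{2011_06_28_lem4.4}. Iterating $N-1$ times — and invoking Lemma~\ref{lm:loc} at each step to ensure the merged triangle still admits a comparison triangle in $\wt M$, so that the hypotheses $0=\theta(\tilde q)<\theta(\tilde x_i)<\theta(\tilde r)$ and the total-angle bound persist — yields a comparison triangle $\triangle(\tilde o\tilde x\tilde y)$ in $\wt M_{\delta}$ for the whole triangle with $\ora{\angle}x\ge\angle\tilde x$ and $\ola{\angle}y\ge\angle\tilde y$. The main obstacle I expect is bookkeeping the angle comparisons at the shared vertices through the gluing: one must be careful that the double triangle lemma is applied with the correct orientation and that the accumulated side length $L_{\rm m}$ matches $\tilde d_\delta(\tilde q,\tilde r)$ at every stage, and that the hinge inequality $\le\pi$ is genuinely preserved rather than just the thin-triangle inequalities — this is where the hypothesis that $\bar c$ is geodesic (so that $c$ itself is a geodesic and all the local forward/backward angles fit together coherently) is doing the real work.
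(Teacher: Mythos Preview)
Your proposal is correct and follows the same strategy as the paper: subdivide $c$, apply Weak TCT (Lemma~\ref{2011_07_08_lem3.10} together with Remark~\ref{rm:conj}) on each short piece, and glue via the double triangle lemma (Lemma~\ref{2011_06_28_lem4.4}), using Lemma~\ref{lm:loc} to keep every intermediate comparison triangle inside $\wt{V}_\delta(\pi)$. The paper organizes this as a supremum argument (defining $S\subset(0,a)$ and showing $\sup S=a$ by contradiction) rather than a finite Lebesgue-number partition, but the content is the same.

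One point in your write-up needs sharpening. Your justification of the hinge inequality
\[
\ola{\angle}\big(p\,x_i\,x_{i-1}\big)+\ora{\angle}\big(p\,x_i\,x_{i+1}\big)\le\pi
\]
appeals to Remark~\ref{rem_2012_01_21_3.1} and to ``a common $v\in\cG_p(x_i)$''. But Remark~\ref{rem_2012_01_21_3.1} assumes the minimal geodesic from $p$ to $c(s)$ is \emph{unique}, which fails exactly at points of $\Cut(p)\setminus\Conj(p)$---and condition~(4) allows such points on $c$. The paper handles this directly from the formulas in the proof of Lemma~\ref{2011_05_27_lem1}:
\[
\cos\ola{\angle}=\lambda^{-1}\max_{v\in\cG_p(c(s))}g_v\big(v,\dot c(s)\big),\qquad
\cos\ora{\angle}=-\lambda^{-1}\min_{v\in\cG_p(c(s))}g_v\big(v,\dot c(s)\big),
\]
so $\cos\ola{\angle}+\cos\ora{\angle}=\lambda^{-1}(\max-\min)\ge 0$, which for angles in $[0,\pi]$ is equivalent to $\ola{\angle}+\ora{\angle}\le\pi$. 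With this correction your inductive gluing goes through exactly as you describe.
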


\begin{proof}
We can exclude the trivial case where $c$ is contained in a geodesic passing through $p$
(recall Remark~\ref{rm:zero}).
Let $S$ be the set of $s \in (0, a)$ such that there is a comparison triangle 
$\triangle (\tilde{o}\tilde{x} \wt{c(s)}) \subset \wt{V}_{\delta}(\pi)$ (with, say, $\theta(\tilde{x})=0$)
of $\triangle (\ora{px}, \ora{pc(s)}) \subset M$ satisfying 
$\ora{\angle} x \ge \angle \tilde{x}$ and $\ola{\angle}(pc(s)x) \ge \angle \wt{c(s)}$. 
It is sufficient to prove $\sup S = a$.
Lemma~\ref{2011_07_08_lem3.10} and Remark~\ref{rm:conj} ensure that $S$ is non-empty.

Now, suppose $\sup S < a$ and take $s_0 \in S$ with $s_0+\ve'' >\sup S$
for $\ve''$ chosen in Section~\ref{sec4}.
Then there is a comparison triangle 
$\triangle (\tilde{o}\tilde{x}_1 \tilde{y}_1) \subset \wt{M}_{\delta}$ 
of $\triangle (\ora{px}, \ora{pc(s_{0})})$ such that 
\begin{equation}\label{2011_08_01_lem4.5_1}
\ora{\angle} x \ge \angle \tilde{x}_1, \qquad \ola{\angle} \big( pc(s_0)x \big) \ge \angle \tilde{y}_1.
\end{equation}
By Lemma~\ref{2011_07_08_lem3.10} and Remark~\ref{rm:conj},
for $\zeta \in (\sup S -s_0,\ve'']$,
$\triangle (\ora{pc(s_0)}, \ora{pc(s_{0} + \zeta)})$ admits a comparison triangle
$\triangle (\tilde{o}\tilde{x}_2 \tilde{y}_2) \subset \wt{M}_{\delta}$ such that
\begin{equation}\label{2011_08_01_lem4.5_2}
\ora{\angle} \big( pc(s_0)c(s_0+\zeta) \big) \ge \angle \tilde{x}_2, \qquad
 \ola{\angle} \big( pc(s_0 + \zeta)c(s_0) \big) \ge \angle \tilde{y}_2.
\end{equation}
We can take $\tilde{x}_2=\tilde{y}_1$ with 
$0 = \theta (\tilde{x}_1) < \theta (\tilde{y}_1) = \theta (\tilde{x}_2) < \theta (\tilde{y}_2)$. 
Note that Theorem~\ref{2011_05_27_thm1} shows
\[ \ola{\angle}\big( pc(s_0)x \big) + \ora{\angle} \big(pc(s_0)c(s_0+\zeta) \big) \le \pi, \]
so that $\angle \tilde{y}_1 + \angle \tilde{x}_2 \le \pi$.
Indeed, putting $\lambda=\max\{ 1,F(-\dot{c}(s_0)) \}$
(and recalling the proof of Lemma~\ref{2011_05_27_lem1}), we have
\begin{align*}
&\cos \ola{\angle}\big( pc(s_0)x \big) +\cos \ora{\angle}\big( pc(s_0)c(s_0+\zeta) \big) \\
&= \lambda^{-1} \max\left\{ g_v\big( v,\dot{c}(s_0) \big) \,\big|\, v \in \cG_p\big( c(s_0) \big) \right\}
 -\lambda^{-1} \min\left\{ g_v\big( v,\dot{c}(s_0) \big) \,\big|\, v \in \cG_p\big( c(s_0) \big) \right\} \\
&\ge 0.
\end{align*}
Hence it follows from Lemmas~\ref{2011_06_28_lem4.4}, \ref{lm:loc} that
there exists a comparison triangle $\triangle (\tilde{o}\tilde{x}_3 \tilde{y}_3) \subset \wt{V}_\delta(\pi)$
of $\triangle(\ora{px},\ora{pc(s_0+\zeta)})$ such that $\angle \tilde{x}_3 \le \angle \tilde{x}_1$
and $\angle \tilde{y}_3 \le \angle \tilde{y}_2$.
Combining these with \eqref{2011_08_01_lem4.5_1} and \eqref{2011_08_01_lem4.5_2}
implies $s_{0} + \zeta \in S$, which contradicts $s_0+\zeta >\sup S$.
$\qedd$
\end{proof}

To remove the hypothesis $c([0,a]) \cap \CC(p) =\emptyset$
and get $\wt{M}_{\delta}$ back to $\wt{M}$, we need the following lemmas.

\begin{lemma}{\rm (see \cite[Lemma 2]{IT1})}\label{lem2.1}
The Hausdorff dimension of $\CC(p)$ is at most $n - 2$. 
\end{lemma}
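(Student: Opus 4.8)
The plan is to transfer the problem to the tangent space $T_pM$ via the exponential map and there combine the structure theory of the cut locus with an elementary dimension count. Since $\exp_p\colon T_pM\to M$ is $C^\infty$, it is locally Lipschitz, and a locally Lipschitz map does not increase Hausdorff dimension; hence it suffices to produce a set $A\subset T_pM$ with $\CC(p)\subset\exp_p(A)$ and Hausdorff dimension of $A$ at most $n-2$. I would introduce the cut-time function $\tau\colon U_pM\to(0,\infty]$ of $p$ (with respect to $F$), which is continuous on the open set $\{\tau<\infty\}$, the first-conjugate-time function $t_1\colon U_pM\to(0,\infty]$, which is lower semicontinuous and satisfies $\tau\le t_1$, and the tangential conjugate locus $\Sigma:=\{w\in T_pM : d(\exp_p)_w \text{ is singular}\}$, so that $\Conj(p)\subset\exp_p(\Sigma)$. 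Along each unit direction $v$ the smallest critical parameter is $t_1(v)$, so the segment $\{tv:0\le t\le\tau(v)\}$ meets $\Sigma$ exactly when $\tau(v)=t_1(v)$; consequently the tangential cut locus $\{\tau(v)v:\tau(v)<\infty\}$ meets $\Sigma$ precisely in $A:=\{\tau(v)v : v\in U_pM,\ \tau(v)=t_1(v)<\infty\}$.

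Next I would check $\CC(p)\subset\exp_p(A)$. A point $x\in\Cut(p)$ is joined to $p$ either by at least two minimal geodesics, or by a unique minimal geodesic which is then conjugate at $x$ (the standard structure of the cut locus, valid also in the possibly irreversible Finsler setting, cf.\ \cite{BCS}, \cite{IT1}). In the second case $x\in\exp_p(A)$ directly. The remaining points of $\CC(p)$ — cut points reached by two or more minimal geodesics which also lie in $\Conj(p)$ — I would absorb, following \cite{IT1}, into the singular part of $\Cut(p)$ discussed below.

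The heart of the proof is the codimension estimate $\dim_{\mathrm H}A\le n-2$, together with the analogous bound for the multi-geodesic part. Here I would use that $r:=d(p,\cdot)$ is locally semiconcave on $M\setminus\{p\}$ and is differentiable exactly off the set $\Cut^{(\ge2)}(p)$ of cut points reached by two or more minimal geodesics. The points of $\exp_p(A)$ lie in $\Cut(p)$, are differentiable points of $r$, and the unique arriving minimal geodesic is conjugate there; such points are precisely the "irregular" points of $r$ (differentiable points lying in the closure of the singular set, where $r$ fails to be $C^{1,1}$ in the conjugate direction), and by the structure theory of singular sets of semiconcave functions — the singular set is a countable union of Lipschitz hypersurfaces and the irregular points form a set of Hausdorff dimension $\le n-2$ — this yields the bound for the contribution of $\Cut^{(1)}(p)$. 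For the part $\Cut^{(\ge2)}(p)\cap\Conj(p)$ I would use that $\Cut^{(\ge2)}(p)$ is $(n-1)$-rectifiable, being covered by the $C^1$ bisecting hypersurfaces between pairs of geodesic branches, and that on each such hypersurface the conjugacy condition is a nontrivial closed constraint, cutting the dimension down to $n-2$. Combining the two contributions gives $\dim_{\mathrm H}\CC(p)\le n-2$.

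The main obstacle is precisely this dimension drop: a priori both $\Cut(p)$ and $\Conj(p)$ can be $(n-1)$-dimensional, and the round sphere — where $\Cut(p)=\Conj(p)=\{\text{antipode}\}$ while the corresponding tangential sets are full $(n-1)$-spheres — shows that the drop is genuinely caused by $\exp_p$ collapsing the kernel directions of $d\exp_p$ along $A$, not by any naive transversality. Making this rigorous is exactly the delicate regularity analysis of the cut locus carried out in \cite{IT1}; I would follow that argument, taking care that all of its inputs (local semiconcavity of $r$, differentiability of $r$ at uniquely reached cut points, and $(n-1)$-rectifiability of $\Cut^{(\ge2)}(p)$) remain available without assuming reversibility of $F$.
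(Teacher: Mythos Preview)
The paper gives no proof; the lemma is quoted from \cite[Lemma~2]{IT1} and used as a black box. So there is no in-paper argument to compare against, only the external one you say you would follow.

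Your setup --- pulling back to $T_pM$, introducing $\tau$, $t_1$, and $A=\{\tau(v)v:\tau(v)=t_1(v)\}$, and noting $\CC(p)\subset\exp_p(A)$ --- is the right beginning and matches \cite{IT1}. But the mechanism you then propose for the dimension drop is not theirs and contains two genuine gaps. First, for the uniquely-reached part you appeal to semiconcavity of $r=d(p,\cdot)$ and assert that the ``irregular'' points (differentiable points lying in the closure of the singular set) of a semiconcave function form a set of Hausdorff dimension $\le n-2$. No such theorem is available in the semiconcave literature; the standard stratification results concern the \emph{non}-differentiability set, stratified by the dimension of the superdifferential, and say nothing of this kind about its differentiable boundary. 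Second, for $\Cut^{(\ge2)}(p)\cap\Conj(p)$ you claim conjugacy is a ``nontrivial closed constraint'' on each bisecting hypersurface, cutting the dimension by one. There is no reason conjugacy along one geodesic branch should be transverse to a bisector determined by other branches, and your own sphere example shows exactly why such transversality heuristics fail: there the bisectors and the tangent conjugate locus coincide, and the drop is produced only by the collapsing of $\exp_p$, not by any intersection count.

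The argument in \cite{IT1} stays in $T_pM$ throughout and does not split into uniquely/multiply-reached cases. Its two ingredients are that $\tau$ is locally Lipschitz on $U_pM$ (their Lemma~1), so $A$ sits inside an $(n-1)$-dimensional Lipschitz graph over $U_pM$, and that on $A$ the kernel of $d\exp_p$ (the direction of the vanishing Jacobi field) is tangent to this graph; hence $\exp_p$ restricted to the graph is a Lipschitz map of rank at most $n-2$, and its image inherits that Hausdorff-dimension bound. Your closing remark about irreversibility is well placed: \cite{IT1} is written for Riemannian manifolds, and a complete proof in the present Finsler setting would have to verify that both ingredients survive without reversibility --- something the paper itself leaves implicit.
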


\begin{lemma}\label{lem2.2}
Assume that $x \not\in \Conj (p)$, $y \not\in \Cut (x)$, and $p \not\in c ([0, d(x, y)])$,
where $c$ denotes the unit speed minimal geodesic segment emanating from $x$ to $y$. 
Then, for each $v \in T_yM \cap F^{-1}(1)$, there exists a sequence 
$\{ c_{i}:[0,a_{i}] \lra M \}_{i \in \N}$
of unit speed minimal geodesic segments with $c_{i}(0)=x$ 
convergent to $c$ such that $c_{i}([0,a_{i}]) \cap \CC(p) = \emptyset$ and 
\[
\lim_{i \to \infty} \frac{1}{F(\exp_{y}^{-1} (c_{i}(a_{i})) )} \exp_{y}^{-1}\big( c_{i}(a_{i}) \big) = v.
\]
\end{lemma}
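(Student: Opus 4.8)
The plan is to approximate $c$ by minimal geodesics that avoid the low-dimensional "bad set" $\CC(p)$, using the freedom in the choice of direction $v$ at the endpoint $y$ together with the structure of the cut locus. First I would fix $v \in T_yM \cap F^{-1}(1)$. Since $y \not\in \Cut(x)$, there is a unique minimal geodesic segment $\sigma$ from $y$ to some nearby points; more to the point, I want to produce points $y_i \to y$ whose unique minimal geodesic segment from $x$ has initial-from-$y$ direction converging to $v$, which one obtains by taking $y_i$ along the geodesic from $y$ in direction $v$ (or a slightly perturbed direction) — because $y \not\in \Cut(x)$, for $y_i$ close to $y$ the minimal geodesic $c_i$ from $x$ to $y_i$ is unique, depends continuously on $y_i$, and $\dot{c}_i$ at the endpoint converges to $v$ after normalization.

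Next comes the genuine content: arranging $c_i([0,a_i]) \cap \CC(p) = \emptyset$. Here I would use Lemma~\ref{lem2.1}, which says $\dim_{\mathcal H} \CC(p) \le n-2$, and the hypothesis $x \not\in \Conj(p)$. The idea is that the endpoints $y_i$ can be chosen from a full-measure (or at least dense) set for which the corresponding $c_i$ avoids $\CC(p)$. Concretely, I would parametrize candidate geodesics from $x$ by their initial directions in a small neighborhood $U$ of $\dot{c}(0)/F(\dot{c}(0))$ in the unit sphere of $T_xM$ (using that $x \not\in \Conj(p)$ so $\exp_x$ is a local diffeomorphism away from the cut locus, and $c$ meets $\Cut(x)$ only possibly at its endpoint $y$). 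The set of directions whose geodesic hits $\CC(p)$ has the form $\exp_x^{-1}(\CC(p))$ intersected with a sphere; since $\CC(p)$ has Hausdorff dimension $\le n-2$ and $\exp_x$ is locally Lipschitz, this image set has Hausdorff dimension $\le n-2 < n-1 = \dim U$, hence empty interior and measure zero in $U$. Therefore directions avoiding $\CC(p)$ are dense in $U$, and I can pick a sequence of such directions converging to the direction of $c$; the associated minimal geodesics $c_i$ (truncated at their first cut point of $x$, or extended to length slightly more than $d(x,y)$) converge to $c$, and by construction stay off $\CC(p)$.

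The last step is bookkeeping: one must check that these $c_i$ can simultaneously be taken \emph{minimal} with $c_i(0)=x$ and with the endpoint direction condition $\lim_i F(\exp_y^{-1}(c_i(a_i)))^{-1}\exp_y^{-1}(c_i(a_i)) = v$. I would handle this by choosing the endpoints $y_i := c_i(a_i)$ to lie on the true geodesic from $y$ in direction $\approx v$ at distance $\to 0$; since $y\not\in\Cut(x)$ and $p\not\in c([0,d(x,y)])$, for large $i$ the segment $c_i$ from $x$ to $y_i$ is minimal and misses $p$, and by continuous dependence $\dot{c}_i(a_i)$ normalized converges to $v$. One reconciles the two perturbations — perturbing the $x$-direction to avoid $\CC(p)$ and perturbing the $y$-endpoint to steer $\dot{c}_i(a_i)$ toward $v$ — by noting both are open conditions that can be satisfied on a dense set.

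The main obstacle I expect is the measure/dimension argument that $\CC(p)$-avoiding directions are dense: one needs to be careful that $\exp_x$ restricted to the relevant region is Lipschitz (true since we stay in a compact set away from $\Conj(x)$, using $y\not\in\Cut(x)$ so a neighborhood of $c([0,d(x,y)))$ avoids $\Cut(x)$), so that it cannot raise Hausdorff dimension, and hence $\exp_x^{-1}(\CC(p))$ still has dimension $\le n-2$ and cannot contain an open subset of the $(n-1)$-sphere of directions. The non-reversibility of $F$ adds no essential difficulty here, only requiring care that "minimal from $x$ to $y_i$" is the relevant (forward) notion throughout.
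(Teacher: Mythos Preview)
Your approach is correct and is essentially the same as the paper's, which simply invokes Lemma~\ref{lem2.1} and refers to \cite[Lemma~3.5]{KT3} for the details; you have faithfully reconstructed the intended argument.

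Two small points to tighten. First, you misread the role of the hypothesis $x \notin \Conj(p)$: it has nothing to do with $\exp_x$ (that map is already a local diffeomorphism near the segment because $y \notin \Cut(x)$). Its purpose is simply to guarantee $x \notin \CC(p) = \Cut(p)\cap\Conj(p)$, without which every $c_i$ would hit $\CC(p)$ at its initial point. Second, the set of bad directions in $S_x$ is the \emph{radial projection} of $\exp_x^{-1}(\CC(p))$, not its intersection with a sphere; and to reconcile the $\CC(p)$-avoidance with the prescribed direction $v$ at $y$, it is cleanest to observe that the cone over $\exp_x^{-1}(\CC(p))$ from the origin has Hausdorff dimension $\le (n-2)+1=n-1$, hence Lebesgue measure zero in $T_xM$, so the corresponding bad endpoints form a null set near $y$ and one may choose good $y_i$'s inside any narrow cone about $v$ at $y$. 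Your ``both are open conditions'' phrasing is not literally correct, but this measure-zero version of the argument is what makes it work.
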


\begin{proof}
This follows from Lemma~\ref{lem2.1} and a similar argument to \cite[Lemma 3.5]{KT3}.
$\qedd$
\end{proof}

\begin{lemma}\label{lem6.8_2012_02_11}
Assume that a local triangle $\triangle (\ora{px}, \ora{py}) = (p, x, y; \gamma, \sigma, c) \subset M$
satisfies $c([0,d(x,y)]) \subset M \setminus \ol{B^+_{\rho}(p)}$ and that,
for some neighborhood $\cN(c) \subset M \setminus \ol{B^+_{\rho}(p)}$ of $c$,
\begin{enumerate}[{\rm (1)}]
\item 
$g_v(w, w)  \ge F(w)^2$ for all $z \in \cN(c)$, $v \in \cG_p(z)$ and $w \in T_zM$,
\item
$\cT_{M}(v,w) = 0$ for all $z \in \cN(c)$, $v \in \cG_p(z)$ and $w \in T_zM$,
and the reverse curve of $c$ is geodesic. 
\end{enumerate}
Then $\triangle (\ora{px}, \ora{py})$ admits a comparison triangle $\triangle (\tilde{p}\tilde{x} \tilde{y})$ 
in $\wt{M}$ with $\ora{\angle} x \ge \angle \tilde{x}$ and $\ola{\angle} y \ge \angle \tilde{y}$.
\end{lemma}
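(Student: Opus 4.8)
The plan is to derive Lemma~\ref{lem6.8_2012_02_11} from Lemma~\ref{2011_07_08_lem4.5} by two successive limiting arguments: an approximation removing the hypothesis $c([0,a])\cap\CC(p)=\emptyset$, and then the limit $\delta\downarrow 0$ that replaces the auxiliary surface $\wt{M}_\delta$ by $\wt{M}$ itself.

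First I would remove the cut--conjugate locus. Since the triangle is local, $y=c(a)\notin\Cut(x)$; moreover $c([0,a])\subset M\setminus\ol{B^+_\rho(p)}$ forces $p\notin c([0,a])$, and (after a harmless preliminary perturbation of $x$ if necessary, so that $x\notin\Conj(p)$ as Lemma~\ref{lem2.2} requires) Lemma~\ref{lem2.2} produces unit speed minimal geodesic segments $c_i:[0,a_i]\lra M$ with $c_i(0)=x$, $c_i\to c$, $c_i([0,a_i])\cap\CC(p)=\emptyset$, and with the terminal direction of $c_i$ converging to that of $c$ at $y$. For $i$ large, $c_i([0,a_i])\subset\cN(c)$ and $c_i(a_i)\notin\Cut(x)$, so conditions (1) and (2) of Lemma~\ref{lem6.8_2012_02_11} are inherited along $c_i$ and $\triangle(\ora{px},\ora{pc_i(a_i)})$ is again local; the one point needing care is that the reverse curve of $c_i$ be geodesic (this is assumed only for $c$), which one arranges by refining the construction in Lemma~\ref{lem2.2}, or by absorbing the discrepancy, which vanishes because $F(-\dot{c}_i)$ converges to the constant $F(-\dot{c})$. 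Granting this, Lemma~\ref{2011_07_08_lem4.5} applied to $\triangle(\ora{px},\ora{pc_i(a_i)})$ gives, for every $\delta\in(0,\delta_1)$ and every large $i$, a comparison triangle $\triangle(\tilde{o}\tilde{x}_i\tilde{y}_i)\subset\wt{M}_\delta$ with $\ora{\angle}(pxc_i(a_i))\ge\angle\tilde{x}_i$ and $\ola{\angle}(pc_i(a_i)x)\ge\angle\tilde{y}_i$.

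Next I would let $i\to\infty$ with $\delta$ fixed. From $c_i\to c$ we get $d(p,c_i(a_i))\to d(p,y)$ and $L_{\rm{m}}(c_i)\to L_{\rm{m}}(c)$, so the comparison triangles (unique up to an isometry of $\wt{M}_\delta$) converge to a comparison triangle $\triangle(\tilde{o}\tilde{x}\tilde{y})$ of $\triangle(\ora{px},\ora{py})$ in $\wt{M}_\delta$, with $\angle\tilde{x}_i\to\angle\tilde{x}$ and $\angle\tilde{y}_i\to\angle\tilde{y}$. On the Finsler side, $\ora{\angle}(pxc_i(a_i))\to\ora{\angle}x$ since $x$ is fixed and only $\dot{c}_i(0)\to\dot{c}(0)$ enters the formula of Lemma~\ref{2011_05_27_lem1}, the set $\cG_p(x)$ being unchanged; and $\ola{\angle}(pc_i(a_i)x)\to\ola{\angle}y$ because $c_i(a_i)\to y$, so the minimal geodesics from $p$ to $c_i(a_i)$ converge to those from $p$ to $y$ and the $\max$ over $\cG_p(c_i(a_i))$ in Lemma~\ref{2011_05_27_lem1} passes to the limit. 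Passing to the limit in the two inequalities of the previous step, $\triangle(\ora{px},\ora{py})$ admits a comparison triangle in $\wt{M}_\delta$ with $\ora{\angle}x\ge\angle\tilde{x}$ and $\ola{\angle}y\ge\angle\tilde{y}$.

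Finally I would let $\delta\downarrow 0$. Since $f_\delta\to f$ uniformly on compacta and $\rho_\delta\to\rho$, the surfaces $\wt{M}_\delta$ converge smoothly to $\wt{M}$ on compacta, and as the comparison triangle of $\triangle(\ora{px},\ora{py})$ has the fixed side lengths $d(p,x)$, $d(p,y)$, $L_{\rm{m}}(c)$, it converges as $\delta\downarrow 0$ to the comparison triangle $\triangle(\tilde{p}\tilde{x}\tilde{y})$ in $\wt{M}$ (which exists since the triangle is local), with convergent angles; hence $\ora{\angle}x\ge\angle\tilde{x}$ and $\ola{\angle}y\ge\angle\tilde{y}$ in $\wt{M}$. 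I expect the main obstacle to lie in the second step: certifying that the perturbed geodesics $c_i$ genuinely meet every hypothesis of Lemma~\ref{2011_07_08_lem4.5}---the geodesic--reverse condition above all---and that the one-sided, $\min/\max$-type Finsler angles at $x$ and $y$ vary continuously along the perturbation, which forces one to keep the vertices away from $\Conj(p)$ and to exploit the freedom in the choice of terminal direction afforded by Lemma~\ref{lem2.2} (this is where the possible jump of $\cG_p(y)$ at a cut point is felt).
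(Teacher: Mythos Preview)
Your approach matches the paper's: approximate $c$ by geodesics $c_i$ avoiding $\CC(p)$ via Lemma~\ref{lem2.2}, apply Lemma~\ref{2011_07_08_lem4.5} to each $\triangle(\ora{px},\ora{pc_i(a_i)})$, let $i\to\infty$, then let $\delta\downarrow 0$; for the case $x\in\Conj(p)$ the paper makes your ``harmless preliminary perturbation'' explicit by replacing $x$ with $x_\ve:=\gamma(d(p,x)-\ve)$ and taking $\ve\to 0$ at the end. Your worry about whether the reverse of $c_i$ is geodesic is legitimate, but the paper's own proof is equally silent on this point---it simply invokes Lemma~\ref{2011_07_08_lem4.5} once $c_i\subset\cN(c)$---so your proposal is at least as careful as the original.
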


\begin{proof}
It suffices to construct a comparison triangle $\triangle(\tilde{o}\tilde{x}\tilde{y})$ in $\wt{M}_{\delta}$
satisfying the angle conditions, and then take the limit as $\delta \to 0$.
Put $a:=d(x,y)$.

We first assume $x \not\in \Conj(p)$.
By the definition of the locality, $y$ is not a cut point of $x$.
Thus, by Lemma~\ref{lem2.2}, 
there exists a sequence $\{ c_{i}:[0, a_{i}] \lra M \}_{i \in \N}$
of unit speed minimal geodesic segments with $c_{i}(0)=x$ convergent to $c$ such that 
$c_{i}([0, a_{i}]) \cap \CC(p) = \emptyset$.
Then, for sufficiently large $i$, the forward triangle $\triangle (\ora{px}, \ora{pc_i(a_i)})$ 
is local and $c_i([0,a_i]) \subset \cN(c)$.
Hence Lemma~\ref{2011_07_08_lem4.5} guarantees that $\triangle (\ora{px}, \ora{pc_i(a_i)})$ 
admits a comparison triangle 
$\triangle(\tilde{o}\tilde{x}\wt{c_{i}(a_{i})}) \subset \wt{M}_{\delta}$ such that
\[ \ora{\angle} \big( pxc_i(a_i) \big) \ge \angle \tilde{x}, \qquad
 \ola{\angle}\big( pc_i(a_i)x \big) \ge \angle \wt{c_i(a_i)}. \]
Since $\lim_{i \to \infty} \dot{c}_{i}(0) = \dot{c}(0)$ and
$\lim_{i \to \infty} \dot{c}_{i}(a_i) = \dot{c}(a)$, we find
\[ \ora{\angle} x =\lim_{i \to \infty} \ora{\angle}\big( pxc_i(a_i) \big) \ge \angle \tilde{x}, \qquad 
 \ola{\angle} y =\lim_{i \to \infty} \ola{\angle}\big( pc_i(a_i)x \big)
 \ge \lim_{i \to \infty} \angle \wt{c_i(a_i)}. \]
Therefore $\triangle (\tilde{o}\tilde{x}\tilde{y}) 
 := \lim_{i \to \infty} \triangle(\tilde{o}\tilde{x}\wt{c_i(a_i)})$ 
is the desired comparison triangle.\par 
Next we consider the case of $x \in \Conj(p)$.
Take sufficiently small $\ve > 0$ such that $y\not\in\Cut (x_\ve)$, 
where $x_\ve := \gamma(d(p, x) - \ve)$. 
Let $c_{\ve}:[0,a_{\ve}] \lra M$ be the unit speed minimal geodesic segment
emanating from $x_{\ve}$ to $y$.
We can assume $c_{\ve}([0,a_{\ve}]) \subset \cN(c)$, because 
$\lim_{\ve \to 0}c_{\ve}=c$ by the locality. 
Since $x_\ve \not\in \Conj (p)$, we can apply the argument above to
$\triangle (\ora{px_\ve}, \ora{py})$, 
and obtain its comparison triangle
$\triangle (\tilde{o} \tilde{x}_\ve\tilde{y})$ 
with $\ora{\angle}(px_{\ve}y) \ge \angle \tilde{x}_{\ve}$
and $\ola{\angle}(pyx_{\ve}) \ge \angle \tilde{y}$.
Then $\triangle (\tilde{o}\tilde{x}\tilde{y}) 
 := \lim_{\ve \to 0} \triangle (\tilde{o} \tilde{x}_\ve\tilde{y})$
is the desired comparison triangle.
$\qedd$
\end{proof}

\noindent {\it Proof of Theorem~$\ref{TCT}$.}
We can prove the theorem by applying Lemma~\ref{lem6.8_2012_02_11} to thin triangles
and by the same argument as the proof of Lemma~\ref{2011_07_08_lem4.5}.
$\qedd$

\medskip

\begin{flushleft}
K.~Kondo, M.~Tanaka\\[1mm]
Department of Mathematics, Tokai University,\\
Hiratsuka City, Kanagawa Pref. 259-1292, Japan\\[0.5mm]
{\small e-mail: {\tt keikondo@keyaki.cc.u-tokai.ac.jp},
{\tt tanaka@tokai-u.jp}}
\bigskip

S.~Ohta\\[1mm]
Department of Mathematics, Kyoto University,\\
Kyoto 606-8502, Japan\\[0.5mm]
{\small e-mail: {\tt sohta@math.kyoto-u.ac.jp}}
\end{flushleft}

\end{document}